\newcommand{\N}{\mathbb{N}}
\newcommand{\R}{\mathbb{R}}
\newcommand{\C}{\mathbb{C}}
\newcommand{\norm}[1]{\left\| #1 \right\|}
\def\Id{{\rm I}}
\DeclareMathOperator{\lev}{lev}
\newtheorem{remark}[theorem]{Remark}
\newcommand{\overbar}[1]{\mkern 1.0mu\overline{\mkern-1.0mu#1\mkern-1.0mu}\mkern 1.0mu}
\title{Electrodeless electrode model for electrical impedance tomography}
\author{J. Dard\'e\footnotemark[2]
\and N. Hyv\"onen\footnotemark[3]
\and T. Kuutela\footnotemark[3]
\and T. Valkonen\footnotemark[4]
}
\begin{document}
\maketitle

\renewcommand{\thefootnote}{\fnsymbol{footnote}}

\footnotetext[2]{Institut de Math\'ematiques de Toulouse, UMR 5219,  Universit\'e de Toulouse, CNRS,  UPS, F-31062 Toulouse Cedex 9, France (jeremi.darde@math.univ-toulouse.fr). The work of JD was supported by the Institut Français de Finlande, the Embassy of France in Finland, the French Ministry of Higher Education, Research and Innovation, the Finnish Society of Sciences and Letters and the Finnish Academy of Science and Letters (2019 Maupertuis Programme).}
\footnotetext[3]{Aalto University, Department of Mathematics and Systems Analysis, P.O. Box 11100, FI-00076 Aalto, Finland (nuutti.hyvonen@aalto.fi, topi.kuutela@aalto.fi). The work of NH and TK was supported by the Academy of Finland (decision 312124).}
\footnotetext[4]{Department of Mathematics and Statistics, University of Helsinki, Finland and ModeMat, Escuela Polit\'ecnica Nacional, Quito, Ecuador (tuomo.valkonen@iki.fi). The work of TV was supported by the Academy of Finland (decisions 314701 and 320022).}

\begin{abstract}
Electrical impedance tomography is an imaging modality for extracting information on the interior structure of a physical body from boundary measurements of current and voltage. This work studies a new robust way of modeling the contact electrodes used for driving current patterns into the examined object and measuring the resulting voltages. The idea is to not define the electrodes as strict geometric objects on the measurement boundary, but only to assume approximate knowledge about their whereabouts and let a boundary admittivity function determine the actual locations of the current inputs. Such an approach enables reconstructing the boundary admittivity, i.e.~the locations and strengths of the contacts, at the same time and with analogous methods as the interior admittivity. The functionality of the new model is verified by two-dimensional numerical experiments based on water tank data.
\end{abstract}

\renewcommand{\thefootnote}{\arabic{footnote}}

\begin{keywords}
electrical impedance tomography, electrode models, extended electrodes, varying contact admittance, Bayesian inversion
\end{keywords}

\begin{AMS}
 35R30, 35J25, 65N21
\end{AMS}

\pagestyle{myheadings}
\thispagestyle{plain}
\markboth{J. DARD\'E, N. HYV\"ONEN, T. KUUTELA, AND T. VALKONEN}{ELECTRODELESS ELECTRODE MODEL}

\section{Introduction}
\label{sec:introduction}

In {\em electrical impedance tomography} (EIT) contact electrodes are used for driving currents into an examined body and for measuring the resulting voltages, with the aim of reconstructing the interior admittivity of the body based on such boundary measurements. We refer to the review articles~\cite{Borcea02,Cheney99,Uhlmann09} for more information on EIT, including theoretical results on the unique identifiability of the interior admittivity assuming idealized boundary measurements. The goal of this work is to introduce a new way of modeling the electrodes in EIT based on a surface admittivity function on the boundary of the imaged object.

The most accurate model for electrode measurements of EIT is arguably the {\em complete electrode model} (CEM) that accounts for the shapes of the electrodes as well as for the thin resistive layers at the electrode-object interfaces~\cite{Cheng89,Somersalo92}. In its basic form the CEM assumes a constant contact on each electrode, but it also generalizes straightforwardly to the case of spatially varying contacts. Other useful electrode models include the {\em shunt model}~\cite{Cheng89} that can be considered as the limit of the CEM as the contacts become perfect~\cite{Darde16} and the {\em point electrode model} \cite{Hanke11b} that is the limit of the CEM for relative measurements as the electrodes converge to points on the object boundary. A common property of all these  models is that they treat the electrodes as subsets of the object boundary.

In this work, we do not assume to know the precise locations of the electrodes but only certain larger boundary sections, called {\em extended electrodes} (cf.~\cite{Hyvonen09}), that correspond to the available prior information on the whereabouts of the true electrodes. For each true electrode, there is exactly one extended electrode. The prior information on the positions of the true electrodes provided by the extended electrodes is complemented by allowing the contact admittance to vary spatially or even vanish on some parts of the extended electrodes. The spatial dependence of the contacts is encoded in the so-called {\em boundary admittivity} function. In particular, if the boundary admittivity is smooth over the whole object boundary, there are no singularities at the edges of the electrodes  (cf.~\cite{Hyvonen17b}), which facilitates efficient numerical solution of associated forward problems.

We consider two alternative parametrizations for the boundary admittivity in our two-dimensional numerical experiments: the first one is directly based on the piecewise linear basis functions for the underlying {\em finite element} (FE) discretization, and the other corresponds to hat-shaped surface admittivities with variable positions, widths and heights on the extended electrodes (cf.~\cite{Hyvonen17b}). We demonstrate that both of these choices can be used for incorporating the estimation of the boundary admittivity in a certain iterative Bayesian output least squares reconstruction algorithm for EIT, leading to more accurate reconstructions of the main unknown, i.e.~the interior admittivity, than employing the standard CEM with inaccurate positions for the true electrodes. See \cite{Barber88,Breckon88,Kolehmainen97} for more information on the artifacts caused by geometric mismodeling in EIT.

Incorporating the estimation of the electrode positions in a reconstruction algorithm of EIT has previously been considered in \cite{Candiani19,Darde12,Darde13a,Darde13b,Hyvonen17c}. These  works are based on computing (shape) derivatives of the measurements of EIT with respect to the positions of electrodes with known shapes, not on modeling the uncertainty in the measurement configuration via a boundary admittivity function. Compared to \cite{Candiani19,Darde12,Darde13a,Darde13b,Hyvonen17c}, an attractive aspect of our approach is that estimating the effective locations of the current inputs is conceptually similar to the reconstruction of the interior admittivity, that is, one aims at reconstructing a spatially varying function defined on the employed FE mesh. For the sake of completeness, it should also be mentioned that EIT with imprecisely known measurement setup has previously been considered in \cite{Hyvonen17,Kolehmainen05,Kolehmainen07,Nissinen11,Nissinen11b} to name a few approaches, but none of these or other works on the topic have tackled the uncertainty in the electrode positions via employing a spatially varying boundary admittivity function. Moreover, it is well known that (time or frequency) difference imaging is a partial remedy for geometric mismodeling~\cite{Barber84}, but in this work we are solely working with absolute EIT measurements.

This text is organized as follows. Section~\ref{sec:forward} introduces the new ``electrodeless electrode model'' and tackles its Fr\'echet differentiability with respect to the interior and boundary admittivities. Section~\ref{sec:algorithm}  describes our iterative Bayesian output least squares reconstruction algorithm, considers the two aforementioned alternative parametrizations for the boundary admittivity function and presents a partial convergence result. In Section~\ref{sec:numerics}, the numerical tests based on experimental water tank data are presented. Finally, Section~\ref{sec:conclusions} lists the concluding remarks.

\section{New electrode model for EIT}
\label{sec:forward}

In this section, we first introduce our new electrode model by generalizing the standard CEM and prove its unique solvability. Subsequently, the Fr\'echet differentiability of the associated forward solution with respect to the boundary and domain admittivities is considered.

\subsection{Forward model and its unique solvability}
Let the bounded Lipschitz domain $\Omega \subset \R^n$, $n=2$ or $3$, model the physical body that is under investigation by EIT. Its boundary $\partial \Omega$ is partially covered by $M \in \N \setminus \{ 1 \}$ electrodes $\{ e_m\}_{m=1}^M$ that are identified with the nonempty connected subsets of $\partial \Omega$ that they cover. However, as we do not assume to know the precise locations of the actual electrodes, we introduce so-called extended electrodes $\{ E_m\}_{m=1}^M$ that have the following properties:
\begin{itemize}
\item[(i)] For all $m = 1, \dots, M$,  $e_m \subset E_m \subset \partial \Omega$.
\item[(ii)] The relatively open, connected extended electrodes are mutually disjoint: $E_m \cap E_l = \emptyset$ for all $m \not= l$.
\end{itemize}
In other words, $\{ E_m\}_{m=1}^M$ characterize our prior information on the whereabouts of the true electrodes $\{ e_m\}_{m=1}^M$. Take note that we do not exclude the case $E:= \cup \overbar{E}_m = \partial \Omega$ at this stage.

The contacts at the (extended) electrodes are characterized by the surface admittivity $\zeta: \partial \Omega \to \C$. Since we must be able to feed current through all electrodes and contact conductances with negative real parts are unphysical, the set of admissible admittivities is defined as
\begin{equation}
  \label{eq:zeta}
\mathcal{Z} := \big\{ \zeta \in L^\infty(E) \  \big| \  {\rm Re}( \zeta) \geq 0 \   {\rm and} \  {\rm Re}( \zeta |_{E_m}) \not \equiv 0 \  {\rm for} \ {\rm all} \ m= 1, \dots, M \big\} \subset L^\infty(\partial \Omega),
\end{equation}
where the conditions on $\zeta$ are to be understood in the topology of $L^\infty(E)$. Here and in what follows,  $\mathcal{Z}$ and $L^\infty(E)$ are always interpreted as subsets of $L^\infty(\partial \Omega)$ via zero continuation. On the other hand, the electromagnetic properties of $\Omega$ are characterized by an isotropic admittivity $\sigma: \Omega \to \C$ that belongs to
\begin{equation}
  \label{eq:sigma}
  \mathcal{S} =  \big\{ \sigma \in L^\infty(\Omega) \  \big| \
          {\rm ess} \inf ( {\rm Re}(\sigma))  > 0 \big\} \subset L^\infty(\Omega).
\end{equation}
In other words, the essential infimum of the real part of the conductivity inside $\Omega$ is larger than zero, meaning that current can flow anywhere within $\Omega$.

Suppose the net currents $I_m\in\C $, $m=1, \dots, M$, are driven through the corresponding electrodes and the resulting constant electrode potentials $U_m \in \C$, $m=1, \dots, M$, are measured. Due to conservation of charge, any physically reasonable current pattern $I = [I_1,\dots,I_M]^{\rm T}$ belongs to the subspace
\[
\C^M_\diamond \, := \, \Big\{J \in\C^M\,\Big|\, \sum_{m=1}^M J_m = 0\Big\}.
\]
To facilitate writing down the boundary condition for the CEM, $U = [U_1,\dots,U_M]^{\rm T} \in \C^M$ is identified with
\begin{equation}
\label{eq:piecewise}
U \, = \, \sum_{m=1}^M U_m \chi_m ,
\end{equation}
where $\chi_m$ is the characteristic function of the $m$th {\em extended} electrode $E_m \subset \partial \Omega$. Whether $U$ refers to a piecewise constant function supported on $\overline{E}$ or a vector in $\C^M$ should be clear from the context.

We mimic the formulation of the CEM presented in \cite{Hyvonen17b}: The electromagnetic potential $u$ inside $\Omega$ and the piecewise constant electrode potential $U$ jointly satisfy
\begin{equation}
\label{eq:cemeqs}
\begin{array}{ll}
\displaystyle{\nabla \cdot(\sigma\nabla u) = 0 \qquad}  &{\rm in}\;\; \Omega, \\[6pt]
{\displaystyle {\nu\cdot\sigma\nabla u} = \zeta (U - u) } \qquad &{\rm on}\;\; \partial \Omega, \\[2pt]
{\displaystyle \int_{E_m}\nu\cdot\sigma\nabla u\,{\rm d}S} = I_m, \qquad & m=1,\ldots,M, \\[4pt]
\end{array}
\end{equation}
interpreted in the weak sense and with $\nu \in L^\infty(\partial \Omega, \R^n)$ denoting the exterior unit normal of $\partial\Omega$. Unlike in \cite{Hyvonen17b}, the true electrodes do not appear in the forward problem \eqref{eq:cemeqs} that is formulated using the extended electrodes. However, if the positions of the true electrodes were known, then one could return to the setting analyzed in \cite{Hyvonen17b} by simply defining $\zeta = 0$ in $E_m \setminus \overbar{e}_m$ for all $m=1, \dots, M$. The connection between the original formulation of the CEM in \cite{Cheng89,Somersalo92} and \eqref{eq:cemeqs} is discussed in \cite{Hyvonen17b}; to put it short, the contact impedances in the traditional formulation of the CEM are obtained as $z_m := (1/\zeta)|_{E_m}$, $m=1, \dots, M$, if $e_m = E_m$ and spatially varying contacts, which may have singularities, are allowed.

We seek the solution of \eqref{eq:cemeqs} in the quotient space $\mathcal{H}^1$ defined via
\begin{align*}
  \mathcal{H}^s &:= \big\{ \{ (v + c, V + c {\bf 1}) \, | \, c \in \C \} \, \big| \, (v, V) \in H^s(\Omega)\oplus \C^M \big\}, \qquad s \in \R,
\end{align*}
with ${\bf 1} := [1, \dots, 1]^{\rm T}\in \C^M$. In other words, all elements of $H^s(\Omega)\oplus \C^M$ that differ by an additive constant are identified as an equivalence class --- this corresponds to the freedom in the choice of the common ground level of potential for the interior and electrode potentials. In particular, when the second component of an element in $\mathcal{H}^s$ is interpreted as a piecewise constant function on the electrodes, the additive constant is also supported on $\overline{E}$. The standard quotient norm for $\mathcal{H}^s$ is defined as
\begin{equation}
\label{eq:norm}
\|(v,V)\|_{\mathcal{H}^s} := \inf_{c\in\C}\Big( \|v-c\|_{H^s(\Omega)}^2 + \| V - c {\bf 1}\|_2^2 \Big)^{1/2},
\end{equation}
where $\| \cdot  \|_2$ denotes the Euclidean norm. We use the notation $\| \cdot  \|_{\C^M / \C}$ for the associated quotient norm of $\C^M$ defined by dropping out the first term on the right-hand side of \eqref{eq:norm}.

As can easily be deduced based on material in \cite{Hyvonen17b,Somersalo92}, the variational formulation of \eqref{eq:cemeqs} amounts to finding $(u,U) \in \mathcal{H}^1$ such that
\begin{equation}
\label{eq:weak}
B_{\sigma,\zeta}\big((u,U),(v,V)\big)  \,=  \, I\cdot \overbar{V} \qquad {\rm for} \ {\rm all} \ (v,V) \in  \mathcal{H}^1,
\end{equation}
where the sesquilinear form $B_{\sigma,\zeta}: \mathcal{H}^1 \times \mathcal{H}^1 \to \C$ is defined by
\begin{equation}
\label{eq:sesqui}
B_{\sigma,\zeta}\big((w,W),(v,V)\big) = \int_\Omega \sigma\nabla w\cdot \nabla \overbar{v} \,{\rm d}x + \int_{\partial \Omega} \zeta (W-w)(\overbar{V}-\overbar{v})\,{\rm d}S,
\end{equation}
with $W, V \in \C^M$ identified with the corresponding piecewise constant functions. If $\sigma \in \mathcal{S}$ and $\zeta \in \mathcal{Z}$ are admissible, the sesquilinear form $B_{\sigma,\zeta}: \mathcal{H}^1 \times \mathcal{H}^1 \to \C$ is bounded and coercive:
\begin{align}
  \label{eq:cont}
  \big| B_{\sigma,\zeta}\big((w,W), (v,V) \big) \big| \, &\leq \, C \|(w,W) \|_{\mathcal{H}^1}  \|(v,V) \|_{\mathcal{H}^1}, \\[1mm]
  \label{eq:coer}
{\rm Re} \Big( B_{\sigma,\zeta} \big((v,V), (v,V) \big) \Big) \, &\geq \, c \|(v,V) \|_{\mathcal{H}^1}^2,
\end{align}
where $c=c(\sigma,\zeta, \Omega,E)>0$, $C=(\sigma,\zeta, \Omega,E) > 0$ are independent of $(w,W), (v,V) \in \mathcal{H}^1$. This conclusion follows by repeating the argumentation in the proof of \cite[Lemma~2.1]{Hyvonen17b}, and it also leads to the following theorem that is essentially a restatement of \cite[Theorem~2.2]{Hyvonen17b} with the extended electrodes playing here the role of the actual electrodes in \cite{Hyvonen17b}.

\begin{theorem}
\label{thm:existence}
If $\sigma \in \mathcal{S}$ and $\zeta \in \mathcal{Z}$, the problem \eqref{eq:cemeqs} has a unique solution $(u,U) \in \mathcal{H}^1$ for any
current pattern $I \in \C_\diamond^M$. Moreover,
$$
\| (u,U) \|_{\mathcal{H}^1} \leq C \| I \|_2,
$$
where $C= C(\Omega,E,\sigma,\zeta)>0$ is independent of $I$.
\end{theorem}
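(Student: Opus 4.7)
The plan is to reduce the theorem to a straightforward application of the Lax--Milgram lemma on the quotient Hilbert space $\mathcal{H}^1$, using the weak formulation \eqref{eq:weak} and the boundedness/coercivity estimates \eqref{eq:cont}--\eqref{eq:coer} that are already stated as consequences of the argument of \cite[Lemma~2.1]{Hyvonen17b}. The first step is simply to observe that $\mathcal{H}^1$, equipped with the quotient norm \eqref{eq:norm}, is a Hilbert space (the completeness is inherited from $H^1(\Omega) \oplus \C^M$ modulo the closed one-dimensional subspace $\{(c, c\mathbf{1}) \mid c \in \C\}$) and that $B_{\sigma,\zeta}$ descends to a well-defined sesquilinear form on $\mathcal{H}^1 \times \mathcal{H}^1$, because both terms in \eqref{eq:sesqui} annihilate additive constant pairs: $\nabla c = 0$ and $(W + c) - (w + c) = W - w$.

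Next, I would verify that the right-hand side of \eqref{eq:weak} defines a bounded antilinear functional on $\mathcal{H}^1$. The key observation here is that the hypothesis $I \in \C^M_\diamond$ ensures $I \cdot \overbar{V}$ is independent of the representative $(v, V) \in (v + c, V + c\mathbf{1})$ of its equivalence class, since
\begin{equation*}
I \cdot \overbar{V + c \mathbf{1}} = I \cdot \overbar{V} + \overbar{c} \sum_{m=1}^M I_m = I \cdot \overbar{V}.
\end{equation*}
Consequently the functional $(v,V) \mapsto I \cdot \overbar{V}$ is well defined on $\mathcal{H}^1$, and it is bounded by $\| I \|_2 \, \| V - c\mathbf{1}\|_2 \leq \| I \|_2 \, \|(v,V)\|_{\mathcal{H}^1}$ for any $c \in \C$; infimizing in $c$ gives the bound $|I \cdot \overbar{V}| \leq \|I\|_2 \, \|(v,V)\|_{\mathcal{H}^1}$.

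With these ingredients in place, the Lax--Milgram lemma applied to $B_{\sigma,\zeta}$ with the stated coercivity constant $c = c(\sigma,\zeta,\Omega,E) > 0$ yields a unique $(u,U) \in \mathcal{H}^1$ solving \eqref{eq:weak}, together with the estimate
\begin{equation*}
\|(u,U)\|_{\mathcal{H}^1} \leq \frac{1}{c} \, \| I \cdot \overbar{\,\cdot\,} \|_{(\mathcal{H}^1)^*} \leq \frac{1}{c} \, \| I \|_2,
\end{equation*}
which is the continuity bound claimed in the theorem with $C = 1/c$. Finally, the equivalence between \eqref{eq:weak} and the weak form of \eqref{eq:cemeqs} follows by choosing test functions of the form $(v,0)$ with $v \in H^1(\Omega)$ (yielding the interior equation and the Robin-type boundary condition) and then $(0, V)$ with $V \in \C^M_\diamond$ (yielding the electrode current conditions).

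I expect the only subtle step to be the well-posedness check for the load functional, i.e.\ seeing that the compatibility condition $I \in \C^M_\diamond$ is exactly what makes the problem solvable in the quotient space; everything else is routine once \eqref{eq:cont}--\eqref{eq:coer} are taken for granted by invoking \cite[Lemma~2.1]{Hyvonen17b}, with the extended electrodes $\{E_m\}$ substituted for the physical electrodes in that reference.
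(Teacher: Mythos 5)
Your proposal is correct and follows essentially the same route as the paper, which (via the cited argument of \cite[Lemma~2.1 and Theorem~2.2]{Hyvonen17b}, with extended electrodes replacing the actual ones) establishes the boundedness and coercivity estimates \eqref{eq:cont}--\eqref{eq:coer} and then invokes the Lax--Milgram lemma on the quotient space $\mathcal{H}^1$, the condition $I \in \C^M_\diamond$ guaranteeing exactly as you note that the load functional $(v,V) \mapsto I \cdot \overbar{V}$ is well defined and bounded on the quotient. Your explicit checks of well-definedness and the bound $|I \cdot \overbar{V}| \leq \|I\|_2 \, \|(v,V)\|_{\mathcal{H}^1}$ are the same ingredients the paper leaves implicit by reference.
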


If the extended electrodes are well separated, the regularity of $u$ is dictated by the smoothness of $\zeta \in \mathcal{Z} \subset L^\infty(\Omega)$ interpreted via zero continuation as a function on the whole of $\partial \Omega$ together with the regularity of the interior admittivity $\sigma$ and the boundary $\partial \Omega$. In particular, smooth parametrizations for $\zeta$ and $\sigma$ facilitate efficient numerical solution of \eqref{eq:cemeqs}, say, by resorting to higher order FEs~\cite{Hyvonen17b}. For simplicity,  the following theorem is stated assuming $\partial \Omega$ and $\sigma$ are infinitely smooth; the effect that irregularities in $\sigma$ and $\partial \Omega$ have on the regularity of $u$ could be straightforwardly analyzed based on standard elliptic theory (see,~e.g.,~\cite{Necas12}).

\begin{theorem}
\label{thm:smoothness}
Assume that $\sigma \in \mathcal{S} \cap \mathcal{C}^\infty(\overline{\Omega})$, $\partial \Omega$ is of class $\mathcal{C}^\infty$ and $\overbar{E}_m \cap \overbar{E}_l = \emptyset$ for $m \not = l$. If $\zeta \in \mathcal{Z} \cap H^s(\partial \Omega)$ for some $s > (n-1)/2$, then the solution to \eqref{eq:weak} satisfies
\begin{equation}
\label{scont}
\| (u,U) \|_{\mathcal{H}^{s+3/2}} \leq C  \| I \|_2,
\end{equation}
where $C = C(\Omega, E, \sigma, \zeta, s) > 0$ is independent of $I \in \C_\diamond^M$.
\end{theorem}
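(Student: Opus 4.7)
The starting point will be \cref{thm:existence}, which supplies $(u,U)\in\mathcal{H}^1$ with $\|(u,U)\|_{\mathcal{H}^1} \le C\|I\|_2$ and in particular a trace $u|_{\partial\Omega} \in H^{1/2}(\partial\Omega)$. The strategy is to view the second line of \eqref{eq:cemeqs} as a Neumann-type boundary condition $\nu\cdot\sigma\nabla u = g$ with $g := \zeta(U-u)$ and then to bootstrap regularity by alternating the smooth-coefficient elliptic estimate
\begin{equation*}
\|u\|_{H^{t+3/2}(\Omega)/\C} \,\le\, C\,\|g\|_{H^t(\partial\Omega)}
\end{equation*}
for the Neumann problem with $\nabla\cdot(\sigma\nabla u) = 0$ in $\Omega$ (whose compatibility $\int_{\partial\Omega} g\,{\rm d}S = \sum_m I_m = 0$ is automatic) with Sobolev multiplication estimates controlling $g$ in terms of the trace of $u$. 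Since the $\C^M/\C$-part of the $\mathcal{H}^{s+3/2}$-norm is already bounded via \cref{thm:existence}, all that remains is to establish the $H^{s+3/2}$-bound on $u$ modulo constants.

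For the multiplication estimates, the decisive observation is that the disjointness $\overbar{E}_m \cap \overbar{E}_l = \emptyset$ combined with the zero-continuation of $\zeta$ off $E$ allows one to choose cutoffs $\psi_m \in \mathcal{C}^\infty(\partial\Omega)$ with $\psi_m \equiv 1$ on $\overbar{E}_m$ and $\psi_m \equiv 0$ on $\overbar{E}_l$ for $l \neq m$, so that $\zeta\chi_m = \psi_m\zeta$; hence the piecewise-constant contribution $\zeta U = \sum_{m=1}^M U_m\psi_m\zeta$ lies in $H^s(\partial\Omega)$ and is controlled by $\|\zeta\|_{H^s}\|U\|_{\C^M/\C}$. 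For the $u$-part, the Sobolev multiplication theorem on the $(n-1)$-manifold $\partial\Omega$ together with the hypothesis $s > (n-1)/2$ ensures that $\zeta$ acts as a bounded multiplier from $H^r(\partial\Omega)$ into $H^{\min(r,s)}(\partial\Omega)$ for every $r \ge 0$. Combining the two contributions, $u|_{\partial\Omega}\in H^r(\partial\Omega)$ implies $g\in H^{\min(r,s)}(\partial\Omega)$ with quantitative dependence on $\|\zeta\|_{H^s}$, $\|I\|_2$, and the $H^r$-norm of the trace.

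The bootstrap then terminates in finitely many steps: starting from $u|_{\partial\Omega}\in H^{1/2}(\partial\Omega)$, each application of the pair (elliptic estimate, multiplication estimate) raises the regularity of the trace by one, and after at most $\lceil s-1/2\rceil$ iterations the trace reaches $H^s(\partial\Omega)$, whereby $g\in H^s(\partial\Omega)$ and one final round of the Neumann estimate yields \eqref{scont} after chaining the intermediate bounds back to $\|I\|_2$. The main obstacle I anticipate is the bookkeeping of the multiplication and elliptic regularity at each step, and in particular verifying that $\zeta U$ genuinely belongs to $H^s(\partial\Omega)$ despite $U$ being only piecewise constant; the disjointness of the extended electrodes is what prevents the characteristic functions $\chi_m$ from producing jump singularities that $\zeta$ would be unable to absorb.
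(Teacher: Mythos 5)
Your proposal is correct and follows essentially the same route as the paper, whose proof simply invokes the bootstrap argument of \cite[Theorem~2.5]{Hyvonen17b}: treat $\zeta(U-u)$ as Neumann data, use that $\zeta\in H^s(\partial\Omega)$ with $s>(n-1)/2$ acts as a multiplier (the disjoint closures of the extended electrodes absorbing the jumps of the piecewise constant $U$), and iterate elliptic regularity until the limit dictated by $\zeta$ is reached — precisely the mechanism described in the remark following the theorem.
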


\begin{proof}
  The claim is a consequence of standard elliptic regularity theory, and it can be deduced by following the line of reasoning leading to \cite[Theorem~2.5]{Hyvonen17b}.
\end{proof}

\begin{remark}
  The essential ingredient of Theorem~\ref{thm:smoothness} is that the smooth behavior of $\zeta$ compensates to a certain extent for the singularities caused by the piecewise constant nature of  $U$ on the right-hand side of the second condition in \eqref{eq:cemeqs}. This leads to the possibility to iterate a bootstrap argument that the Neumann trace of $u$ has the same Sobolev regularity as its Dirichlet trace, which in turn leads to higher regularity for $u$ itself up to a certain limit dictated by $\zeta$. It seems intuitive that one could  use such an argument also in the case when the closures of the extended electrodes are allowed to touch if it is required that $\zeta$ goes smoothly enough to zero at their common boundaries. However, as the case of touching (extended) electrodes was not considered in \cite{Hyvonen17b}, we do not stress this matter any further to avoid rewriting the corresponding proofs.
\end{remark}

\subsection{Extension for unphysical boundary conductivities}
Before considering the differentiability of the second part of the solution to \eqref{eq:cemeqs} with respect to $\zeta$, it is natural to extend the set of admissible boundary conductivities so that it becomes an open subset of $L^\infty(E)$. To this end, we simply define
\begin{equation}
  \label{eq:zeta2}
\mathcal{Z}' = \big\{ \zeta \in L^\infty(E) \ \big | \ \text{For any $\sigma \in \mathcal{S}$, the condition \eqref{eq:coer} holds with $c = c(\sigma,\zeta) > 0$.} \big\},
\end{equation}
which is, as usual, interpreted via zero continuation as a subset of $L^\infty(\partial \Omega)$. Since it is clear that \eqref{eq:cont} actually holds for any $\sigma \in L^\infty(\Omega)$ and $\zeta \in L^\infty(E)$, it follows immediately from the Lax--Milgram lemma that Theorem~\ref{thm:existence} remains valid if $\mathcal{Z}$ is replaced by $\mathcal{Z}'$ in its assertion. Moreover, $\mathcal{Z}'$ is indeed open as a subset of $L^\infty(E)$, which is a byproduct of the following more general lemma.

\begin{lemma}
  \label{lemma:coer}
  For any $(\sigma,\zeta) \in \mathcal{S} \times \mathcal{Z}'$, there exists $\delta = \delta(\sigma,\zeta,\Omega,E) > 0$ such that
  $$
  {\rm Re} \Big( B_{\sigma + \omega,\zeta + \eta} \big((v,V), (v,V) \big) \Big) \geq \, \frac{c(\sigma,\zeta)}{2} \|(v,V) \|_{\mathcal{H}^1}^2 \qquad \forall (v,V) \in \mathcal{H}^1
  $$
  for all $(\omega,\eta) \in L^\infty(\Omega) \times L^\infty(E)$ satisfying $\|\omega \|_{L^\infty(\Omega)}, \|\eta \|_{L^\infty(E)} \leq \delta$. In particular, $\mathcal{Z}'$ is an open subset of $L^\infty(E)$.
\end{lemma}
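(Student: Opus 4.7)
The plan is a direct perturbation argument. Expanding by bilinearity one has
\[
B_{\sigma+\omega,\zeta+\eta}\big((v,V),(v,V)\big) = B_{\sigma,\zeta}\big((v,V),(v,V)\big) + \int_\Omega \omega\,|\nabla v|^2\,\dx + \int_{\partial\Omega} \eta\,|V-v|^2\,{\rm d}S,
\]
so after invoking the assumed coercivity of $B_{\sigma,\zeta}$ with constant $c(\sigma,\zeta)>0$, the task reduces to absorbing the two perturbation integrals into $\|(v,V)\|_{\mathcal{H}^1}^2$ when $\|\omega\|_{L^\infty(\Omega)}$ and $\|\eta\|_{L^\infty(E)}$ are small.

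The technical heart of the proof is two quotient-norm estimates depending only on the geometry. First, since $\nabla v = \nabla(v-c)$ for every $c\in\C$, we have $\|\nabla v\|_{L^2(\Omega)} \le \|v-c\|_{H^1(\Omega)}$, and minimizing in $c$ using \eqref{eq:norm} yields $\|\nabla v\|_{L^2(\Omega)} \le \|(v,V)\|_{\mathcal{H}^1}$. Second, the identity $V-v = (V-c\mathbf{1}) - (v-c)$ exhibits the quotient invariance of $V-v$; combining the triangle inequality with the trace bound $\|v-c\|_{L^2(\partial\Omega)} \le C_{\rm tr}\|v-c\|_{H^1(\Omega)}$ and the finite-dimensional estimate $\|V-c\mathbf{1}\|_{L^2(E)}^2 = \sum_m |V_m-c|^2|E_m| \le \max_m |E_m|\,\|V-c\mathbf{1}\|_2^2$, and then taking the infimum over $c$, gives
\[
\|V-v\|_{L^2(E)} \le C_0(\Omega,E)\,\|(v,V)\|_{\mathcal{H}^1}.
\]

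Plugging these two bounds into the real part of the expansion above delivers
\[
{\rm Re}\,B_{\sigma+\omega,\zeta+\eta}\big((v,V),(v,V)\big) \ge \bigl(c(\sigma,\zeta) - C(\|\omega\|_{L^\infty(\Omega)} + \|\eta\|_{L^\infty(E)})\bigr)\,\|(v,V)\|_{\mathcal{H}^1}^2
\]
with $C = C(\Omega,E)$. Setting $\delta := c(\sigma,\zeta)/(4C)$ then yields the stated half-coercivity for all admissible perturbations. For the openness assertion I would specialize to $\omega\equiv 0$: for each $\sigma\in\mathcal{S}$ and $\zeta\in\mathcal{Z}'$, the corresponding $\delta(\sigma,\zeta)$ produces a ball around $\zeta$ in $L^\infty(E)$ on which $B_{\sigma,\,\cdot}$ remains coercive, so that $\zeta+\eta\in\mathcal{Z}'$ and $\zeta$ lies in the interior.

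The only step that is not completely mechanical is the boundary estimate $\|V-v\|_{L^2(E)} \le C_0\|(v,V)\|_{\mathcal{H}^1}$: it requires recognizing the quotient invariance of $V-v$ and combining it with the trace theorem at the infimizing constant. Everything else reduces to bilinearity and direct use of the coercivity granted by the hypothesis $\zeta\in\mathcal{Z}'$.
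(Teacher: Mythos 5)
Your proof is correct and follows essentially the same route as the paper's: expand the diagonal quadratic form, invoke the coercivity constant $c(\sigma,\zeta)$ granted by the definition \eqref{eq:zeta2}, absorb the two perturbation integrals into $\|(v,V)\|_{\mathcal{H}^1}^2$ via the bounds $\|\nabla v\|_{L^2(\Omega)}\le\|(v,V)\|_{\mathcal{H}^1}$ and $\|V-v\|_{L^2(E)}\le C_0\|(v,V)\|_{\mathcal{H}^1}$, choose $\delta\sim c(\sigma,\zeta)/(4\max\{1,C\})$, and deduce openness by specializing to $\omega\equiv 0$. The only difference is that where the paper refers to \cite[Lemma~2.4]{Hyvonen17b} for the boundary estimate, you prove it directly from the quotient invariance of $V-v$, the trace theorem and the finite-dimensional bound on $\|V-c\mathbf{1}\|_{L^2(E)}$, which is precisely the content of that cited lemma.
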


\begin{proof}
  Consider arbitrary $(\sigma,\zeta) \in \mathcal{S} \times \mathcal{Z}'$ and $(\omega, \eta) \in L^\infty(\Omega) \times L^\infty(E)$. For any $(v,V) \in \mathcal{H}^1$,
\begin{align*}
  {\rm Re} \Big( B_{\sigma+\omega,\zeta+\eta} \big((v,V), (v,V) \big) \Big) & = {\rm Re} \Big( B_{\sigma,\zeta} \big((v,V), (v,V) \big) \Big) +  \int_{\Omega}{\rm Re} (\omega) |\nabla v|^2 \,{\rm d}x \\
&  \qquad \quad +  \int_{\partial \Omega} {\rm Re}(\eta) | V- v|^2 \, {\rm d} S \\[1mm]
& \geq \big( c(\sigma, \zeta) - \| \omega\|_{L^\infty(\Omega)}  - K \| \eta \|_{L^\infty(E)} \big)\| (v,V) \|_{\mathcal{H}^1}^2,
\end{align*}
where we used \eqref{eq:zeta2}, $K = K(\Omega,E)>0$, and the estimate for the integral over $\partial \Omega$ follows from the same reasoning as~\cite[Lemma~2.4]{Hyvonen17b}. Choosing $\delta = c(\sigma, \zeta)/(4 \max \{1,K\})$ completes the proof.
\end{proof}

 In particular, the forward problem \eqref{eq:cemeqs} remains uniquely solvable if $\zeta \in \mathcal{Z} \subset \mathcal{Z}'$ is replaced by $\zeta + \eta$, with a small enough $0 \not=\eta \in L^\infty(E)$, even if $\zeta + \eta$ is negative in its real part on some subsets of $E$ (cf.~\eqref{eq:zeta}).

\subsection{Fr\'echet derivative with respect to the boundary conductivity}

The electrode potentials, i.e.~the second part of the solution to \eqref{eq:cemeqs}, can obviously be considered as a function of three variables:
\begin{equation}
  \label{eq:meas_map}
U:
\left\{
\begin{array}{l}
  (\sigma, \zeta, I) \mapsto U(\sigma, \zeta, I), \\[1mm]
  \mathcal{S} \times \mathcal{Z}' \times \C^M_\diamond \to \C^M_\diamond,
\end{array}
\right.
\end{equation}
where we have systematically selected the ground level of potential so that $U$ has always zero mean. Since our choice for the (extended) set of admissible boundary conductivities $\mathcal{Z}'$ is nonstandard (cf.~\cite{Hyvonen18}), we need to verify some basic properties of the measurement map \eqref{eq:meas_map}.

\begin{lemma}
  \label{lemma:cont}
  The mapping $U: \mathcal{S} \times \mathcal{Z}' \times \C^M_\diamond \to \C^M_\diamond$ is continuous.
\end{lemma}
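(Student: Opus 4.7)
The plan is to reduce the claim to an energy estimate derived from the weak formulation \eqref{eq:weak}. Fix an arbitrary reference point $(\sigma_0, \zeta_0, I_0) \in \mathcal{S} \times \mathcal{Z}' \times \C^M_\diamond$ and perturb to $(\sigma, \zeta, I) = (\sigma_0 + \omega, \zeta_0 + \eta, I_0 + J)$. Let $(u_0, U_0)$ and $(u, U)$ denote the solutions of \eqref{eq:weak} for the unperturbed and perturbed data, respectively, each represented by the zero-mean electrode potential so that $U, U_0 \in \C^M_\diamond$.

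The key observation is that the sesquilinear form in \eqref{eq:sesqui} is linear separately in $\sigma$ and $\zeta$, so that $B_{\sigma_0 + \omega, \zeta_0 + \eta} = B_{\sigma_0,\zeta_0} + B_{\omega,\eta}$. Subtracting the two weak equations and rearranging, one obtains
\begin{equation*}
B_{\sigma_0+\omega,\zeta_0+\eta}\bigl((u-u_0, U-U_0),(v,V)\bigr) = J \cdot \overbar{V} - B_{\omega,\eta}\bigl((u_0,U_0),(v,V)\bigr)
\end{equation*}
for every $(v,V) \in \mathcal{H}^1$. By Lemma~\ref{lemma:coer}, the left-hand form remains coercive with constant $c(\sigma_0,\zeta_0)/2$ provided $\|\omega\|_{L^\infty(\Omega)}, \|\eta\|_{L^\infty(E)} \leq \delta(\sigma_0,\zeta_0,\Omega,E)$. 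Testing with $(v,V) = (u-u_0, U-U_0)$, applying Cauchy--Schwarz to $J \cdot \overbar{(U-U_0)}$, and using the continuity bound \eqref{eq:cont} (which, as noted after \eqref{eq:zeta2}, remains valid for arbitrary $\omega \in L^\infty(\Omega)$ and $\eta \in L^\infty(E)$) applied to $B_{\omega,\eta}$ yields
\begin{equation*}
\|(u-u_0, U-U_0)\|_{\mathcal{H}^1} \leq C\Bigl( \|J\|_2 + \bigl(\|\omega\|_{L^\infty(\Omega)} + \|\eta\|_{L^\infty(E)}\bigr)\,\|(u_0,U_0)\|_{\mathcal{H}^1} \Bigr),
\end{equation*}
with $C$ depending only on $(\sigma_0, \zeta_0, \Omega, E)$. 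By Theorem~\ref{thm:existence}, $\|(u_0,U_0)\|_{\mathcal{H}^1}$ is finite and fixed, so the right-hand side tends to zero as $(\omega,\eta,J) \to 0$.

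Finally, passing from the $\mathcal{H}^1$ quotient-norm estimate to convergence of $U$ in $\C^M_\diamond$ is immediate: by the definition of the norm in \eqref{eq:norm}, $\|U - U_0\|_{\C^M/\C} \leq \|(u-u_0, U-U_0)\|_{\mathcal{H}^1}$, and on $\C^M_\diamond$ the quotient norm agrees with the Euclidean norm applied to the zero-mean representative. I do not foresee any real obstacle: the main technicalities are the bookkeeping with quotient-space representatives and ensuring the perturbations stay within the coercivity threshold supplied by Lemma~\ref{lemma:coer}.
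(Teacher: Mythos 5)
Your argument is correct and follows essentially the same route as the paper: subtract the two weak formulations so that the difference satisfies a variational problem for the perturbed form $B_{\sigma_0+\omega,\zeta_0+\eta}$ with a right-hand side linear in $(\omega,\eta,J)$, invoke the uniform coercivity of Lemma~\ref{lemma:coer}, bound the perturbation term via boundedness of the sesquilinear form (with the boundary term controlled as in Lemma~\ref{lemma:coer}), and pass from the quotient norm to the Euclidean norm on $\C^M_\diamond$. The only difference is cosmetic: you write the perturbation term compactly as $B_{\omega,\eta}((u_0,U_0),\cdot)$ and stop at qualitative continuity, whereas the paper records the explicit bound in terms of $\|I\|_2$ for later reuse in the differentiability proof.
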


\begin{proof}
  Let $(\sigma, \zeta, I) \in \mathcal{S} \times \mathcal{Z}' \times \C^M_\diamond$ be arbitrary and the first two components of $(\omega,\eta,J) \in L^\infty(\Omega) \times L^\infty(E) \times \C^M_\diamond$ small enough so that also $(\sigma + \omega, \zeta + \eta, I+J) \in \mathcal{S} \times \mathcal{Z}' \times \C^M_\diamond$; see Lemma~\ref{lemma:coer} and \eqref{eq:sigma}. Let $(u,U)$ and $(u_{\omega,\eta,J},U_{\omega,\eta,J})$ be the unique solutions of \eqref{eq:cemeqs} for the parameter triplets $(\sigma, \zeta, I)$ and $(\sigma + \omega, \zeta + \eta, I+J)$, respectively, and define $(w_{\omega,\eta,J},W_{\omega,\eta,J}) = (u_{\omega,\eta,J} - u ,U_{\omega,\eta,J}-U)$. By subtracting the corresponding variational formulations~\eqref{eq:sesqui}, one easily deduces that
\begin{equation}
  \label{eq:diff_var}
  B_{\sigma+\omega,\zeta+\eta}\big((w_{\omega, \eta,J},W_{\omega,\eta,J}),(v,V)\big) = J\cdot \overline{V} - \int_\Omega \omega \nabla u \cdot \nabla \overbar{v} \,{\rm d}x - \int_{\partial \Omega} \eta (U -u)(\overbar{V}-\overbar{v})\,{\rm d}S
  \end{equation}
  for all $(v,V) \in \mathcal{H}^1$.

 Assume that $\|\omega \|_{L^\infty(\Omega)}, \|\eta \|_{L^\infty(E)} \leq \delta$, with $\delta = \delta(\sigma,\zeta) >0$ as in  Lemma~\ref{lemma:coer}. Choosing $(v,V) = (w_{\omega,\eta,J},W_{\omega,\eta,J})$ in \eqref{eq:diff_var} and employing Lemma~\ref{lemma:coer} yields
  \begin{align*}
    \|(w_{\omega,\eta,J}, &W_{\omega,\eta,J}) \|_{\mathcal{H}^1}^2  \leq \frac{2}{c(\sigma, \zeta)} \Big(  \|J\|_2 \| W_{\omega,\eta,J}\|_{\C^M/\C} + \int_{\Omega} |\omega| | \nabla u \cdot \nabla \overline{w}_{\omega,\eta,J} | \, {\rm d} x  \\
    & \qquad \qquad \qquad  \qquad \qquad  +   \int_{\partial \Omega} |\eta| |U - u| | W_{\omega,\eta,J} - w_{\omega,\eta,J}  | \, {\rm d} S\Big) \\[1mm]
    &\leq C'\big(\|J\|_2 +  \max \{ \|\omega\|_{L^\infty(\Omega)},  \|\eta\|_{L^\infty(E)} \} \|(u,U)\|_{\mathcal{H}^1} \big) \|(w_{\omega,\eta,J}, W_{\omega,\eta,J})\|_{\mathcal{H}^1},
  \end{align*}
  where $C' = C'(\sigma,\zeta, \Omega,E) > 0$ and the integral over $\partial \Omega$ was handled by resorting to similar means as in the proof of Lemma~\ref{lemma:coer}. Dividing by $\|(w_{\omega,\eta,J}, W_{\omega,\eta,J})\|_{\mathcal{H}^1}$ and applying the generalization of Theorem~\ref{thm:existence} mentioned before Lemma~\ref{lemma:coer}, we finally obtain
  \begin{equation}
    \label{eq:contiunity}
\|W_{\omega,\eta,J} \|_{\C^M / \C} \! \leq \| (w_{\omega,\eta,J}, W_{\omega,\eta,J}) \|_{\mathcal{H}^1} \! \leq C'' \big( \|J\|_2 + \max \{ \|\omega\|_{L^\infty(\Omega)},  \|\eta\|_{L^\infty(E)} \} \| I \|_2 \big),
\end{equation}
where $C'' = C''(\sigma,\zeta, \Omega,E) > 0$. Because the quotient norm $\| \cdot \|_{\C^M / \C}$ is equivalent to $\| \cdot \|_2$ on $\C_{\diamond}^M$, the proof is complete.
\end{proof}

Let us next prove the differentiability of $U: \mathcal{S} \times \mathcal{Z}' \times \C^M_\diamond \to \C^M_\diamond$; such a result is of course well known for the standard formulation of the CEM~(see,~e.g.,~\cite{Hyvonen18,Lechleiter06}), but our extended definition for the admissible contact conductivities arguably merits (re)writing a formal proof. To this end, let us introduce an auxiliary variational problem of finding $(u',U') = (u'(\sigma, \zeta, I; \omega, \eta, J), U'(\sigma, \zeta, I; \omega, \eta, J)) \in \mathcal{H}^1$ such that
\begin{equation}
  \label{eq:zetaD}
 B_{\sigma,\zeta}\big((u',U'), (v,V) \big) = J \cdot \overline{V} - \int_\Omega \omega \nabla u \cdot \nabla \overbar{v} \, {\rm d} x - \int_{\partial \Omega}\eta \, (U - u) (\overbar{V} - \overbar{v}) \, {\rm d} S \quad \ \forall (v,V) \in \mathcal{H}^1,
\end{equation}
where $\sigma \in \mathcal{S}$, $\zeta \in \mathcal{Z}'$, $J \in \C_\diamond^M$, $\omega \in L^\infty(\Omega)$, $\eta \in L^\infty(E) \subset L^\infty(\partial \Omega)$, and $(u,U) = (u(\sigma, \zeta, I),U(\sigma, \zeta, I)) \in \mathcal{H}^1$ is the solution to \eqref{eq:cemeqs}. Since it is straightforward to check that the right-hand side of \eqref{eq:zetaD} satisfies (cf.,~e.g.,~\cite{Hyvonen17b})
\begin{align*}
  \Big| J \cdot \overline{V} - \int_\Omega \omega \nabla u & \cdot \nabla \overbar{v} \, {\rm d} x  - \int_{\partial \Omega}\eta \, (U - u) (\overbar{V} - \overbar{v}) \, {\rm d} S \Big| \\[1mm]
  & \leq C \big( \|J\|_2 + \max\{ \| \omega \|_{L^\infty(\Omega)},\|\eta\|_{L^\infty(E)}\} \|(u,U)\|_{\mathcal{H}^1}\big)  \|(v,V)\|_{\mathcal{H}^1},
\end{align*}
it follows from the Lax--Milgram theorem that \eqref{eq:zetaD} has a unique solution that satisfies
\begin{align*}
  \| (u',U') \|_{\mathcal{H}^1} &\leq C \big(  \|J\|_2 + \max\{ \| \omega \|_{L^\infty(\Omega)},\|\eta\|_{L^\infty(E)}\} \|(u,U)\|_{\mathcal{H}^1}\big)  \\[1mm]
  &\leq C' \big(  \|J\|_2 + \max\{ \| \omega \|_{L^\infty(\Omega)},\|\eta\|_{L^\infty(E)}\} \|I \|_{2}\big),
\end{align*}
where $C= C(\Omega, E, \sigma, \zeta)$ and $C'= C'(\Omega, E, \sigma, \zeta)$ are positive constants. Moreover, the pair $(u',U')$ depends linearly on $(J, \omega, \eta)$ as does the right-hand side of \eqref{eq:zetaD}.

\begin{theorem}
  \label{thm:U-zetadiff}
  The mapping
  $$
  \mathcal{S} \times \mathcal{Z}' \times \C_\diamond^M \ni (\sigma,\zeta,I) \mapsto U(\sigma, \zeta, I) \in \C_\diamond^M
  $$
  is Fr\'echet differentiable. To be more precise, for any $(\sigma,\zeta,I)\in
  \mathcal{S} \times \mathcal{Z}' \times \C_\diamond^M$,
  \begin{align*}
    \big\| U(&\sigma+\omega, \zeta + \eta, I+J) - U(\sigma, \zeta, I)  - U'(\sigma, \zeta, I; \omega,\eta,J) \big\|_2 \\[1mm]
    &= \mathcal{O}\big(\max\{ \| \omega \|_{L^\infty(\Omega)}^2,\| \eta \|_{L^\infty(E)}^2, \|J\|_2^2 \} \big), \quad \ \ (\omega, \eta,J) \in L^\infty(\Omega) \times L^\infty(E) \times \C_\diamond^M,
  \end{align*}
  where the Fr\'echet derivative $U'(\sigma, \zeta, I; \omega,\eta,J) \in \C_\diamond^M$ is the second part of the solution to \eqref{eq:zetaD}. Moreover, this derivative can be assembled through the relation
  \begin{equation}
    \label{eq:zeta_sampling}
U'(\sigma, \zeta, I; \omega, \eta,J) \cdot \tilde{I} = J \cdot \tilde{U} - \int_{\Omega} \omega \nabla u \cdot \nabla \tilde{u} \, {\rm d} x -  \int_{\partial \Omega} \eta \, (U - u) (\tilde{U} - \tilde{u}) \, {\rm d} S,
\end{equation}
  where $(U, u), (\tilde{U}, \tilde{u}) \in \mathcal{H}^1$ are the solutions for \eqref{eq:cemeqs} at the common parameters
  $(\sigma, \zeta) \in \mathcal{S} \times \mathcal{Z}'$ but for the current patterns $I, \tilde{I} \in \C_\diamond^M$, respectively.
\end{theorem}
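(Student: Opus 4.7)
The plan is to mimic the standard argument for Fréchet differentiability in the CEM, treating the boundary admittivity analogously to the interior admittivity, and then derive the sampling identity by exploiting the symmetric (bilinear, not sesquilinear) structure hidden in $B_{\sigma,\zeta}$.

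First, I would set $(w,W) := (u_{\omega,\eta,J},U_{\omega,\eta,J})-(u,U)$, which already satisfies \eqref{eq:diff_var} as derived inside the proof of Lemma~\ref{lemma:cont}. Define the remainder $(r,R) := (w,W) - (u',U')$. Subtracting \eqref{eq:zetaD} from \eqref{eq:diff_var} and noting that both right-hand sides involve the \emph{same} $(u,U)$, all the linear-in-$(J,\omega,\eta)$ data cancel and one obtains
\begin{equation*}
B_{\sigma+\omega,\zeta+\eta}\big((r,R),(v,V)\big) \,=\, -\int_\Omega \omega\,\nabla u'\cdot\nabla\overbar{v}\,\dx - \int_{\partial\Omega}\eta\,(U'-u')(\overbar{V}-\overbar{v})\,{\rm d}S
\end{equation*}
for every $(v,V)\in\mathcal{H}^1$, i.e.\ the right-hand side is purely quadratic in the perturbation because $u'$, $U'$ depend linearly on $(\omega,\eta,J)$.

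Next, for $\|\omega\|_{L^\infty(\Omega)}$ and $\|\eta\|_{L^\infty(E)}$ below the threshold $\delta(\sigma,\zeta)$ provided by Lemma~\ref{lemma:coer}, I would test with $(v,V)=(r,R)$, use the uniform coercivity bound $c(\sigma,\zeta)/2$, and estimate the boundary integral exactly as in the proof of Lemma~\ref{lemma:cont} (via~\cite[Lemma~2.4]{Hyvonen17b}). This yields
\begin{equation*}
\|(r,R)\|_{\mathcal{H}^1} \,\leq\, C\,\max\bigl\{\|\omega\|_{L^\infty(\Omega)},\|\eta\|_{L^\infty(E)}\bigr\}\,\|(u',U')\|_{\mathcal{H}^1}.
\end{equation*}
Plugging in the a priori bound $\|(u',U')\|_{\mathcal{H}^1}\leq C'(\|J\|_2+\max\{\|\omega\|,\|\eta\|\}\|I\|_2)$ already recorded before the theorem, and absorbing mixed products of the perturbation norms into the $\max\{\cdot^2\}$ by the elementary inequality $ab\le\max\{a^2,b^2\}$, gives the claimed quadratic remainder estimate. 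Linearity of $(u',U')$ in $(\omega,\eta,J)$ together with the equivalence of $\|\cdot\|_{\C^M/\C}$ and $\|\cdot\|_2$ on $\C^M_\diamond$ then identifies $U'$ as the Fréchet derivative in the stated sense.

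For the sampling formula \eqref{eq:zeta_sampling}, the obstacle is that $B_{\sigma,\zeta}$ is sesquilinear, not symmetric, so one cannot swap the arguments directly. The trick is to observe that $(u,U)$ also solves the \emph{bilinear} identity
\begin{equation*}
b_{\sigma,\zeta}\big((u,U),(v,V)\big) := \int_\Omega\sigma\nabla u\cdot\nabla v\,\dx + \int_{\partial\Omega}\zeta(U-u)(V-v)\,{\rm d}S \,=\, I\cdot V,
\end{equation*}
obtained from \eqref{eq:weak} by the substitution $(v,V)\mapsto(\overbar{v},\overbar{V})$; the form $b_{\sigma,\zeta}$ is symmetric bilinear. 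The same substitution in \eqref{eq:zetaD} shows that $(u',U')$ satisfies the analogous bilinear equation with right-hand side $J\cdot V - \int_\Omega \omega\nabla u\cdot\nabla v\,\dx - \int_{\partial\Omega}\eta(U-u)(V-v)\,{\rm d}S$. Testing this with $(v,V)=(\tilde u,\tilde U)$ and the bilinear equation for $(\tilde u,\tilde U)$ with $(v,V)=(u',U')$, and equating via symmetry of $b_{\sigma,\zeta}$, produces \eqref{eq:zeta_sampling}. The main technical hurdle is really the bookkeeping in step two; once the quadratic structure is isolated, everything follows routinely from Lemma~\ref{lemma:coer} and the linearity of $(u',U')$.
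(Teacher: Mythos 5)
Your proposal is correct and follows essentially the same route as the paper: subtract the two variational identities, test the resulting remainder equation with the remainder itself, combine coercivity with the already-established a priori bounds to obtain the quadratic estimate, and derive \eqref{eq:zeta_sampling} by a symmetry/conjugate--test-function argument (your symmetric bilinear form $b_{\sigma,\zeta}$ is just a repackaging of the paper's use of $(\overline{\tilde{u}},\overline{\tilde{U}})$ as test pair in \eqref{eq:zetaD}). The only, harmless, difference is mirror-image bookkeeping in the remainder step: you keep the perturbed form $B_{\sigma+\omega,\zeta+\eta}$ on the left so that the right-hand side involves $(u',U')$ and coercivity comes from Lemma~\ref{lemma:coer}, whereas the paper keeps $B_{\sigma,\zeta}$ (coercive by the definition of $\mathcal{Z}'$) with $(w_{\omega,\eta,J},W_{\omega,\eta,J})$ on the right and closes the estimate via \eqref{eq:contiunity}; both choices need the same smallness assumption and yield the same conclusion.
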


\begin{proof}
  Let us adopt the notation introduced in the proof of Lemma \ref{lemma:cont}.  Subtracting \eqref{eq:zetaD} from \eqref{eq:diff_var} results in
  \begin{align*}
   B_{\sigma,\zeta}\big((w_{\omega, \eta,J}-u',&W_{\omega,\eta,J}-U'),(v,V)\big) =  \\ &- \int_\Omega \omega \nabla w_{\omega, \eta,J} \cdot \nabla \overbar{v} \, {\rm d} x - \int_{\partial \Omega}\eta \, (W_{\omega, \eta,J} - w_{\omega, \eta,J}) (\overbar{V} - \overbar{v}) \, {\rm d} S \qquad
  \end{align*}
  for all $(v,V) \in \mathcal{H}^1$. Setting $(v,V) = (w_{\omega, \eta,J}-u', W_{\omega,\eta,J}-U')$ and employing \eqref{eq:coer} (cf.~\eqref{eq:zeta2}), it is straightforward to deduce that
  \begin{align*}
    \| (w_{\omega, \eta,J}-u', W_{\omega,\eta,J}-U') \|_{\mathcal{H}^1}^2 &\leq C \max\{\| \omega \|_{L^\infty(\Omega)}, \| \eta \|_{L^\infty(E)}\} \, \| (w_{\omega, \eta,J}, W_{\omega,\eta,J}) \|_{\mathcal{H}^1}  \\[1mm]
    & \qquad \qquad \qquad \times \| (w_{\omega, \eta,J}-u', W_{\omega,\eta,J}-U') \|_{\mathcal{H}^1},
  \end{align*}
  where $C = C(\sigma, \zeta, \Omega, E) >0$. After dividing by $\| (w_{\omega, \eta,J}-u', W_{\omega,\eta,J}-U') \|_{\mathcal{H}^1}$, the claim on the Fr\'echet differentiability follows from the second inequality in \eqref{eq:contiunity} under the assumption that $\| \omega \|_{L^\infty(\Omega)}, \| \eta \|_{L^\infty(E)} < \delta$, with $\delta = \delta(\sigma, \zeta)>0$ as in Lemma~\ref{lemma:coer}.

  The representation formula \eqref{eq:zeta_sampling} can  be deduced by using the complex conjugate  $(\overline{\tilde{U}}, \overline{\tilde{u}})$ of the solution to the original problem \eqref{eq:weak} for $\tilde{I} \in \C_\diamond^M$ as the test function pair in \eqref{eq:zetaD} and subsequently equating the left-hand sides of \eqref{eq:weak} and \eqref{eq:zetaD}.
\end{proof}

\begin{remark}
	\label{rem:paramD}
	By the chain rule for Banach spaces (see,~e.g.,~\cite[Theorem~7.1-3]{Ciarlet13}), the representation formula \eqref{eq:zeta_sampling} also extends to an arbitrary parametrization of $\zeta$ as
	$$
        \partial_\theta U(\sigma, \zeta(\theta), I; \vartheta) \cdot \tilde{I} = - \int_{\partial \Omega} \partial_\theta \zeta(\theta;\vartheta) \, (U - u) (\tilde{U} - \tilde{u}) \, {\rm d} S, \qquad \theta \in D, \ \vartheta \in \mathcal{B},
        $$
        assuming the mapping $\mathcal{B} \supset D \ni \theta \mapsto \zeta(\theta) \in \mathcal{Z}' \subset L^\infty(E)$ is Fr\'echet differentiable, with $\mathcal{B}$ being a Banach space and $D$ its open subset. A similar conclusion also applies to parametrizations of the domain conductivity.
\end{remark}

\section{Bayesian reconstruction algorithm}
\label{sec:algorithm}
In this section we briefly describe our numerical forward solver and formulate a Gauss--Newton type reconstruction algorithm for simultaneous reconstruction of the isotropic domain admittivity $\sigma$ and the boundary admittivity $\zeta$. Since both $\sigma$ and $\zeta$ are modeled as real-valued in our numerical experiments, they will be referred to as the domain and surface conductivities in what follows. Similar algorithms have been previously employed for related problems in,~e.g.,~\cite{Hyvonen17c, Vilhunen02}.

To facilitate a reasonable numerical implementation, we assume that the domain $\Omega \subset \R^2$ is simply connected and sufficiently smooth, and that it can be easily triangulated. Furthermore, we assume that the boundary subsections corresponding to the extended electrodes $\{E_m\}_{m=1}^M \subset \partial\Omega$ are well separated and connected. However, we still only assume that $e_m \subset E_m$ for each $m = 1, \dots, M$,~i.e.,~that each extended electrode covers the corresponding true electrode, but it does not usually hold that $e_m = E_m$. Finally, we assume that the extended electrodes are enumerated and parametrized along the boundary curve $\partial \Omega$.

Approximate solutions for \eqref{eq:cemeqs} are computed using a \emph{finite element method} (FEM). In our FEM solver, the domain $\Omega$ is discretized into a triangle mesh, denoted by $\Omega_h$, on which the domain and boundary potentials are solved as linear combinations of piecewise linear basis functions. The domain conductivity $\sigma \in L^{\infty}_+(\Omega_h, \R)$ is discretized with respect to the same piecewise linear basis as the solution. Inspired by the observations in \cite{Hyvonen18}, our algorithm actually aims at reconstructing the (finite-dimensional) log-conductivity $\kappa := \log \sigma \in L^{\infty}(\Omega_h, \R)$ instead of the actual conductivity.\footnote{However, the conductivity $\sigma = \exp{\kappa}$ itself is visualized in the presented reconstructions to facilitate comparison with previous works.} The derivatives of the forward solutions with respect $\kappa$ can be computed by resorting to \eqref{eq:zeta_sampling} and the chain rule for Banach spaces; see Remark~\ref{rem:paramD} and \cite[Eq.~(9)]{Hyvonen18}.

The piecewise linear boundary curve $\partial \Omega_h$ naturally also determines the edges on which the extended electrodes are defined. In order to avoid current leaking directly between electrodes, we require at least one boundary node (and the corresponding basis function) in-between any adjacent extended electrodes on $\partial \Omega_h$.
The contact conductivity $\zeta$ is parametrized on the piecewise linear boundary curve of the mesh. We formulate two different parametrizations for $\zeta$ in Section \ref{sec:zetaoptions}, denoted generally by a finite-dimensional real map $\theta \mapsto \zeta(\theta)$.

In order to simplify the notation, we define concatenated vectors for voltage measurements, current injections and forward model solutions, respectively, as
$$
\begin{aligned}
	\mathcal{V} &= \begin{bmatrix} (V^{(1)})^\top, \dots , (V^{(M-1)})^\top  \end{bmatrix}^\top \in \R^{M(M-1)},\\[1mm]
	\mathcal{I} &= \begin{bmatrix} (I^{(1)})^\top, \dots , (I^{(M-1)})^\top  \end{bmatrix}^\top \in \R^{M(M-1)} \quad \text{and} \\[1mm]
	\mathcal{U}(\kappa, \theta, \mathcal{I}) &= \begin{bmatrix} (U(\exp(\kappa), \zeta(\theta), I^{(1)}))^\top, \dots, (U(\exp(\kappa), \zeta(\theta), I^{(M-1)}))^\top  \end{bmatrix}^\top \in \R^{M(M-1)},
\end{aligned}
$$
where the measured (noisy) electrode voltages $\{V^{(i)}\}_{i=1}^{M-1} \subset \R^M_\diamond$ correspond to the current patterns $\{I^{(i)}\}_{i=1}^{M-1} \subset \R^M_\diamond$, and $\{U(\exp(\kappa), \zeta(\theta), I^{(i)})\}_{i=1}^{M-1}$ are the electrode potential components of the solutions to the forward problem \eqref{eq:cemeqs} for given domain log-conductivity $\kappa$, parametrized boundary conductivity $\zeta(\theta)$ and current pattern $I^{(i)}$.  The `current basis' for $\R_\diamond^{M-1}$ is defined via $I^{(i)} = c(e_1 - e_{i+1})$ with a physically reasonable $c \in \R$ and $e_1, \dots, e_m \in \R^M$ denoting the standard basis vectors, that is, the first electrode feeds the current into the body and it is guided out in turns through the other $M-1$ electrodes. In what follows, we do not explicitly write down the dependence of $\mathcal{U}(\kappa, \theta, \mathcal{I})$ on $\mathcal{I}$ to shorten the notation.

We formulate the reconstruction problem as minimization of a Tikhonov-type functional:
\begin{equation}
	\label{eq:minimization}
	\arg\min_{\kappa, \theta} \left( \| \mathcal{U}(\kappa, \theta) - \mathcal{V} \|^2_{\Gamma^{-1}_{\rm noise}} + \| \kappa - \kappa^\mu \|^2_{\Gamma^{-1}_\kappa} + \| \theta - \theta^\mu \|^2_{\Gamma^{-1}_\theta} \right) ,
\end{equation}
where the notation $\| x \|^2_A := x^\top \! A x$ is used. We have included in \eqref{eq:minimization} prior information motivated by the Bayesian inversion paradigm: the symmetric positive definite covariance matrices $\Gamma_{\rm noise}$, $\Gamma_\kappa$ and $\Gamma_\theta$ correspond to, respectively, the measurement noise, the domain log-conductivity and the parametrization of the boundary conductivity, while $\kappa^\mu$ and $\theta^\mu$ are the expected values for the to-be-estimated variables. To be more precise, \eqref{eq:minimization} is equivalent to finding a {\em maximum a posteriori} estimate for the parameters defining the domain and boundary conductivities assuming that they and the additive measurement noise follow {\em a priori} the Gaussian distributions $\mathcal{N}(\kappa^\mu, \Gamma_\kappa)$, $\mathcal{N}(\theta^\mu, \Gamma_\theta)$ and $\mathcal{N}(0, \Gamma_{\rm noise})$, respectively; see,~e.g.,~\cite{Kaipio05}. As the covariance matrices are assumed positive definite, the minimization functional is bounded from below.

For the piecewise linear parametrization of the domain conductivity, we always use a Gaussian smoothness prior with the covariance structure
\begin{equation}
  \label{eq:cov_kappa}
	(\Gamma_\kappa)_{ij} = \gamma^2_\kappa \exp \left( -\frac{|x_i - x_j|^2}{2 \lambda^2_\kappa} \right),
\end{equation}
where $x_i, x_j \in \R^2$ are nodal positions in the FE mesh of $\Omega_h$, $\gamma^2_\kappa$ is the pointwise variance, and $\lambda_\kappa$ is the correlation length controlling the expected spatial smoothness of the reconstructed admittivity. The employed forms for $\Gamma_\theta$ are considered in the following subsection where our parametrizations for the boundary conductivity are described.

We aim at solving \eqref{eq:minimization} iteratively using the Gauss--Newton algorithm~\cite{Nocedal99}. Let $J(\kappa, \theta) = \begin{bmatrix} \partial_\kappa\mathcal{U}(\kappa, \theta), \partial_\theta\mathcal{U}(\kappa, \theta) \end{bmatrix}$ be the Jacobian matrix of $\mathcal{U}(\kappa, \theta)$ with respect to the discretized domain log-conductivity $\kappa$ and the parameters defining the boundary conductivity $\theta$. The Jacobian matrix can be assembled using the representation formula for the Fr\'echet derivative~\eqref{eq:zeta_sampling}. See Section~\ref{sec:zetaoptions} below for information on the employed parametrization $\theta \mapsto \zeta(\theta)$ as well as on using \eqref{eq:zeta_sampling} in connection to those parametrizations.

It is straightforward to see that applying the basic form of the Gauss--Newton algorithm to \eqref{eq:minimization} corresponds to iteratively linearizing $\mathcal{U}(\kappa, \theta)$ in the first term of the to-be-minimized functional in \eqref{eq:minimization} around the current estimate for the solution and subsequently defining the next estimate to be the minimizer of the resulting quadratic Tikhonov functional. To this end, observe that the aforementioned linearization around a given point $\tau := (\kappa, \theta)$ leads via slight abuse of  notation to the least squares problem
\begin{align}
	\label{eq:gn}
	\arg\min_{\Delta\tau} &
	\left\| L \left(
	\begin{bmatrix}
		J(\tau) \\[1mm]
		\mathrm{I}
	\end{bmatrix}
        \!\Delta \tau
	-
	\begin{bmatrix}
		\mathcal{U}(\tau) - \mathcal{V}  \\[1mm]
		\tau - \tau^\mu \\
	\end{bmatrix}
	\right) \right\|^2,
\end{align}
where $\Delta \tau$ denotes the additive change in $\tau$, $\tau^\mu = (\kappa^\mu, \theta^\mu)$ is the expected value for the total parameter vector and $\mathrm{I}$ is an identity matrix of the appropriate size. Furthermore, $L$ is a Cholesky factor for the block diagonal concatenation of the inverses of the covariance matrices $\Gamma_{\rm noise}$, $\Gamma_\kappa$ and $\Gamma_\theta$,~i.e.~$L^\top L = \diag(\Gamma^{-1}_{\rm noise}, \Gamma^{-1}_\kappa, \Gamma^{-1}_\theta)$.

Accompanying the basic Gauss--Newton algorithm with line search to deduce an optimal step size for each update, we finally end up with the following algorithm:

\bigskip

\begin{algorithm}[H]
	Set iteration count to $j = 0$. \\
	Choose an initial guess $\tau_0 = (\kappa_0, \theta_0)$. \\
	\While{Stopping criteria not met}{
		Compute $J_j = J(\tau_j)$\\
		Determine the step direction $d_j$
                as the least squares solution of
		$$
			L \begin{bmatrix} J_j \\[1mm] \mathrm{I} \end{bmatrix} \! d_j =
			L \begin{bmatrix} \mathcal{U}(\tau_j) - \mathcal{V} \\[1mm] \tau_j - \tau^\mu  \end{bmatrix}.
		$$\\
		        Using a backtracking line search, seek a step length $t \in [0, 1]$ such that $\tau_j + t d_j$ obtains sufficient decrease on the original Tikhonov functional in \eqref{eq:minimization}; compare \cref{thm:convergence}. \\
     		Update $\tau_{j+1} = \tau_j + t_j  d_j$. \\
		Update $j = j + 1$.
	}
	\caption{Simultaneous reconstruction of $\kappa$ and $\theta$}
	\label{alg:minimization}
\end{algorithm}

\bigskip

\subsection{Two alternative parametrizations for the boundary conductivity~$\zeta$}
\label{sec:zetaoptions}

In the standard CEM the parametrization of the boundary conductivity can be interpreted as a piecewise constant function defined on the boundary of the domain
\begin{equation*}
	\zeta_{\rm CEM}(x) = \sum\limits_{m = 1}^M \theta_m \tilde{\chi}_m(x), \qquad \theta_m \in \R_+, \ x \in \partial \Omega,
\end{equation*}
where $\tilde{\chi}_m$ is the characteristic function of the true electrode $e_m$ on $\partial \Omega$. Although this model is known to be precise enough to match measurement accuracy \cite{Cheng89,Somersalo92}, it has some often ignored inconveniences. First, if the exact locations of the electrode contacts are not known, significant reconstruction errors can be expected \cite{Barber88,Breckon88,Darde12,Kolehmainen97}. Second, the piecewise constant nature of the contact conductivity (or resistivity) has a negative effect on the smoothness of the solution to \eqref{eq:cemeqs}, which in turn decreases the highest attainable convergence rate for, say, higher order FEMs when they are applied to numerically solving~\eqref{eq:cemeqs} or evaluating \eqref{eq:zeta_sampling} \cite{Hyvonen17b}.
Finally, there seems to be no other (physical) justification for a constant boundary conductivity over each electrode except the Occam's razor: the simplest model is the most attractive one. In reality, there is no obvious reason to expect equally good contact across the whole electrode, especially if the contact is established by simply pressing a metal electrode tip against skin in some medical application of EIT.

Our first novel parametrization for the boundary conductivity corresponds to simply using the piecewise linear FE basis restricted to the boundary. This option overcomes all issues listed above. However, as it carries numerous degrees of freedom, it also requires a strong prior or regularization.
We call this option the {\em piecewise linear} (PL) parametrization
\begin{equation}
  \label{eq:PL_zeta}
\zeta_{\rm PL}: \; \theta \mapsto \zeta_{\rm PL}(\theta), \quad \R^N \to H^1(\cup E_m),
\end{equation}
where $N$ is the number of finite element nodes on $\cup E_m$.
To assure nonnegativity of the boundary conductivity without having to introduce constraints in the optimization problem \eqref{eq:minimization}, the parameter vector $\theta \in \R^N$ defines the nodal values as $\zeta_1 = \theta_1^2, \dots, \zeta_N = \theta_N^2$. The boundary conductivity on the rest of the extended electrodes is then simply the linear interpolant of the neighboring nodal values. We tacitly assign the boundary nodes on $\partial \Omega \setminus \cup E_m$ with zero boundary conductivities, thus extending $\zeta_{\rm PL}$ to the whole boundary $\partial \Omega$ as an element of $H^1(\partial \Omega)$. Consult \cite{Hyvonen17b} for information on the effect of such a parametrization on the convergence of FEMs for the CEM.

The PL parametrization is regularized assuming a similar Gaussian smoothness prior as for the domain conductivity. To this end, let $\mathcal{J}_m$ denote the index set for the nodal indices on the closure of the $m$th extended electrode. A {\em preliminary} covariance matrix for the parameters defining the nodal values of the boundary conductivity as their squares is then formed via
\begin{equation}
  \label{eq:cov_zeta_PL}
	\big( \tilde{\Gamma}_{\theta} \big)_{ij} =
	\begin{cases}
		\gamma_\theta^2 \exp\left( -\frac{d_{\partial \Omega}(x_i, x_j)^2}{2 \lambda_\theta^2} \right) \qquad &{\rm if} \  i,j \in \mathcal{J}_m \ {\rm for} \ {\rm some} \ m \in \{1, \dots, M\},\\[2mm]
		0 \quad &\text{otherwise},
	\end{cases}
\end{equation}
where $d_{\partial \Omega}(\cdot, \cdot)$ measures the Euclidean distance along the boundary curve, $\gamma_\theta^2$ is the componentwise variance and $\lambda_\theta$ is the correlation length along the boundary. The expected value $\theta^\mu$ is simply set to zero, as in our numerical experiments its value did not significantly affect the reconstructions. The zero-mean Gaussian density defined by these parameters is finally conditioned by fixing the contact conductivities at the end points of the electrodes to zero; the corresponding covariance matrix $\Gamma_\theta$ for the parameters defining the nodal values on the interior of the extended electrodes can be obtained by forming a suitable Schur complement based on $\tilde{\Gamma}_\theta$ (see,~e.g.,~\cite{Kaipio05}).

Our second parametrization is a modified version of the ``hat-CEM'' introduced in~\cite{Hyvonen17b}. However, instead of letting the support of a hat-shaped boundary conductivity to cover a whole (extended) electrode, we allow its location and width to vary on each extended electrode. More precisely,  we define the {\em parametric hat} (PH) boundary conductivity  electrode-wise through
\begin{equation}
  \label{eq:PH_zeta}
  (\zeta_{{\rm PH}})_m :
  \left\{
  \begin{array}{l}
    (h_m, l_m, w_m) \mapsto (\zeta_{{\rm PH}})_m(h_m, l_m, w_m), \\[2mm]
    \big\{ (h,l,w) \in  \R_+^3 \, | \, l - \frac{w}{2} \geq 0 \ \text{and} \ l+\frac{w}{2} \leq 1\big\} =: \mathcal{Z}'_{\rm PH} \to  H^1(E_m),
  \end{array}
  \right.
\end{equation}
with
\begin{equation}
	\label{eq:phhats}
	(\zeta_{{\rm PH}})_m(h_m, l_m, w_m)(t) =
	\begin{cases}
		\frac{2 h_m (-2l_m + w_m + 2t)}{ w_m^2} \quad &\text{if} \quad t \in (l_m - \frac{w_m}{2}, l_m),\\[1mm]
		\frac{2 h_m (2l_m + w_m - 2t)}{w_m^2} \quad &\text{if} \quad t \in [l_m, l_m + \frac{w_m}{2}),\\[1mm]
		0 &\text{otherwise,}
	\end{cases}
\end{equation}
where $t$ takes values in the interval $(0, 1)$ as a normalized parametrization for the curve segment $E_m$. Observe that the parameters $l_m$, $w_m$ and $h_m$ have clear physical interpretations; they correspond, respectively, to the (normalized) center point of the electrode contact, the (normalized) width of the contact and the net contact conductance. However, as with the standard CEM, there is no reason to actually expect the boundary conductivity distribution to follow the shape of a hat function. The main motivation for the parametrization \eqref{eq:phhats} is that it is arguably the simplest `movable' $H^1(\partial \Omega)$ parametrization for the boundary conductivity.

Combining all extended electrodes and again setting the values of the boundary conductivity on $\partial \Omega \setminus \cup E_m$ to be zero, the parametrization \eqref{eq:phhats} naturally extends to the whole domain boundary as
\begin{equation*}
	\zeta_{\rm PH} : \theta = (\mathbf{h}, \mathbf{l}, \mathbf{w}) \mapsto \zeta_{\rm PH}(\mathbf{h}, \mathbf{l}, \mathbf{w}) = \zeta_{\rm PH}(\theta), \quad (\mathcal{Z}_{\rm PH}')^{M}  \to H^1(\partial \Omega).
\end{equation*}
As any reasonable finite element mesh has more than three edges per an (extended) electrode, the PH parametrization leads to a significantly lower number of degrees of freedom than the PL parametrization. For the sake of completeness, we have included the representation formulas for computing the derivatives of the electrode measurements with respect to $\mathbf{h}$, $\mathbf{l}$ and $\mathbf{w}$, needed for constructing the Jacobian matrix with respect to the parametrization of the boundary conductivity, in Appendix~\ref{app:PHderivatives}; cf.~Remark~\ref{rem:paramD}.

We found the standard Tikhonov regularization with a simple diagonal weight matrix adequate for regularizing the PH boundary conductivity parametrization. That is,
\begin{equation}
  \label{eq:cov_zeta_PH}
	\Gamma_{\theta} =
	\begin{bmatrix}
		\Gamma_h & & \\
		& \Gamma_l & \\
		& & \Gamma_w
	\end{bmatrix}
	=
	\begin{bmatrix}
	\gamma_h^2 {\rm I} & & \\
	& \gamma_l^2 {\rm I} & \\
	& & \gamma_w^2 {\rm I},
	\end{bmatrix} \in \R^{3M \times 3M},
\end{equation}
where $\gamma_h^2, \gamma_l^2, \gamma_w^2 > 0$ are the variances for the three parameter families.
Choosing the free parameters for the PH parametrization is considered in Section~\ref{sec:numerics}.

\begin{remark}
  \label{remark:PH}
  For the PH parametrization, we modify the line search step in Algorithm~\ref{alg:minimization} to ensure the contact conductivity parameters stay within $\mathcal{Z}'_{\rm PH}$ and thus define hat-shaped functions supported on the closures of the extended electrodes.  For a total parameter vector $\tau_j + t d_j$ under consideration, denote by $(h_{m,j},l_{m,j}, w_{m,j})$ the corresponding parameters defining the contact conductivity on the $m$th extended electrode according to~\eqref{eq:phhats}. Before evaluating the Tikhonov functional \eqref{eq:minimization} at $\tau_j + t d_j$, the parameter triplet $(h_{m,j},l_{m,j}, w_{m,j})$ is clamped inside $\mathcal{Z}'_{\rm PH}$ for each $m=1,\dots, M$ via the following steps (cf.~\eqref{eq:PH_zeta}): {\em (i)} If $w_{m,j} > 1$, then redefine $w_{m,j} = 1$. {\em (ii)} If $l_{m,j} - \frac{w_{m,j}}{2} < 0$, then redefine $l_{m,j} =  \frac{w_{m,j}}{2}$. {\em (iii)}  If $l_{m,j} + \frac{w_{m,j}}{2} > 1$, then redefine $l_{m,j} =  1 - \frac{w_{m,j}}{2}$. In particular, this modification means the partial convergence proof presented in the following section does not cover the case of the PH parametrization.
  \end{remark}

\subsection{On convergence of the algorithm}

For the PL parametrization of the boundary conductivity, a limited form of convergence of the Gauss–Newton method of \cref{alg:minimization} can be deduced from basic results \cite{Nocedal99}. The PH parametrization is more involved due to the clamping of the conductivity parameters to the admissible domain; see~Remark~\ref{remark:PH}. If a computationally expensive weighted projection was performed instead of simple clamping, and instead of performing a line search, local growth conditions were satisfied, the results of \cite{salzo2012convegence} could be used. Alternatively, if we were to add a proximal relaxation term to the method, the results of \cite{jauhiainen2019gaussnewton} could be used, which allow for such constraints and even nonsmooth regularization.

\begin{theorem}
	\label{thm:convergence}
	Assume that the PL parametrization \eqref{eq:PL_zeta} is employed for the boundary conductivity and suppose $\Gamma_\kappa, \Gamma_\theta \le \gamma \Id$ for some $\gamma>0$. Write $F$ for the objective of \eqref{eq:minimization}.
	Let $\{\tau_j\}_{j=0}^\infty$ be generated by \cref{alg:minimization} with the step lengths $t_j$ satisfying for given parameters $0 < c_1 < c_2 < 1$ the Wolfe conditions
        	\begin{align*}
		F(\tau_j+t_jd_j) & \le F(\tau_j) + c_1 t_j F'(\tau_j)d_j
		&& \text{(sufficient decrease)}\quad\text{and}
		\\
		F'(\tau_j+t_jd_j)d_j & \ge c_2 F'(\tau_j)d_j
		&& \text{(curvature)}.
	\end{align*}
	Then $F'(\tau_j) \to 0$, the set of accumulation points $\hat\tau$ of $\{\tau_j\}_{j \in \N}$ is non-empty, and all of them are critical, i.e., $F'(\hat\tau)=0$.
	Moreover, there always exist subintervals of $[0, \infty)$ on which $t_j$ satisfies the Wolfe conditions.
\end{theorem}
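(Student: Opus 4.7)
The plan is to reduce the claim to the classical convergence theory of line-search descent methods (specifically Zoutendijk's theorem and the existence lemma for Wolfe step sizes, see \cite{Nocedal99}), with the main work being the verification that the Gauss--Newton search directions are uniformly gradient-related. Writing
\[
H(\tau) := J(\tau)^{\!\top} \Gamma_{\rm noise}^{-1} J(\tau) + \mathrm{diag}(\Gamma_\kappa^{-1}, \Gamma_\theta^{-1}),
\]
the step $d_j$ defined by Algorithm~\ref{alg:minimization} satisfies $H(\tau_j)\, d_j = -\tfrac{1}{2}\nabla F(\tau_j)$, so the key is to bound the spectrum of $H(\tau)$ uniformly along the iterates.

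First I would record two basic properties of $F$. The Fr\'echet differentiability of $U$ proved in Theorem~\ref{thm:U-zetadiff}, composed with the smooth parametrizations $\kappa \mapsto \exp(\kappa)$ and the componentwise squaring in the PL parametrization \eqref{eq:PL_zeta}, shows that $\mathcal{U}(\kappa,\theta)$ is continuously differentiable, hence so is $F$; moreover, on any bounded set of parameters $J(\tau)$ is bounded. Second, the assumption $\Gamma_\kappa,\Gamma_\theta \le \gamma \Id$ yields $\Gamma_\kappa^{-1},\Gamma_\theta^{-1}\ge \gamma^{-1}\Id$, so the regularization terms in \eqref{eq:minimization} dominate as $\|\tau\|\to\infty$, making $F$ coercive. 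Since the first Wolfe condition enforces monotone non-increase of $F(\tau_j)$, the iterates stay in the bounded sublevel set $\{F\le F(\tau_0)\}$; in particular $\{\tau_j\}$ has accumulation points.

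Next I would use the two-sided spectral bounds on $H$ to show that $d_j$ is a uniformly gradient-related descent direction. Since $H(\tau)\succeq \gamma^{-1}\Id$ for all $\tau$, and since $J(\tau)$ is bounded on the sublevel set containing the iterates, there exist constants $0 < m \le M < \infty$ with $m \Id \preceq H(\tau_j) \preceq M \Id$ for all $j$. Then
\[
-\nabla F(\tau_j)^{\!\top} d_j = \tfrac{1}{2}\nabla F(\tau_j)^{\!\top} H(\tau_j)^{-1} \nabla F(\tau_j) \ge \tfrac{1}{2M}\|\nabla F(\tau_j)\|^2,
\]
and $\|d_j\|\le \tfrac{1}{2m}\|\nabla F(\tau_j)\|$, so the angle between $d_j$ and $-\nabla F(\tau_j)$ has cosine bounded below by $m/M > 0$. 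Combining this with the Lipschitz continuity of $\nabla F$ on the compact sublevel set and the Wolfe conditions, Zoutendijk's theorem \cite[Thm.~3.2]{Nocedal99} yields $\sum_j \cos^2(\angle(d_j,-\nabla F(\tau_j)))\,\|\nabla F(\tau_j)\|^2 < \infty$, hence $\nabla F(\tau_j)\to 0$. Continuity of $\nabla F$ then forces every accumulation point $\hat\tau$ to satisfy $F'(\hat\tau)=0$.

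The remaining assertion, existence of step lengths satisfying the Wolfe conditions, follows from a standard argument \cite[Lemma~3.1]{Nocedal99}: $\phi_j(t):=F(\tau_j+t d_j)$ is continuously differentiable on $[0,\infty)$, bounded below (since $F\ge 0$), and has $\phi_j'(0)<0$ by the descent property above, so the affine line $\ell(t)=F(\tau_j)+c_1 t F'(\tau_j)d_j$ crosses $\phi_j$ and the mean value theorem produces the curvature condition on a neighborhood of the crossing. The main obstacle in the whole argument is the uniform spectral bound on $H(\tau_j)$, which hinges on $J(\tau)$ being bounded on sublevel sets; this is delivered by the continuity of $\tau\mapsto J(\tau)$ coming from Theorem~\ref{thm:U-zetadiff} together with coercivity of $F$, and is precisely where the PL parametrization (which is globally smooth in $\theta$) is used, whereas the clamped PH parametrization of Remark~\ref{remark:PH} falls outside this framework.
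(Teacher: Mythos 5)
Your proposal is correct and follows essentially the same route as the paper: continuous differentiability of $F$ from the Fr\'echet differentiability of $U$ composed with the smooth PL/log parametrizations, coercivity from $\Gamma_\kappa,\Gamma_\theta \le \gamma \Id$ giving a bounded sublevel set containing the iterates, the uniform lower bound $\gamma^{-1}\Id$ on the Gauss--Newton matrix coming from the prior covariances, Lipschitz/boundedness of the Jacobian on that set, and the standard Wolfe-step existence lemma. The only difference is presentational: the paper verifies these conditions and cites the packaged Gauss--Newton convergence theorem \cite[Theorem~10.1]{Nocedal99}, whereas you re-derive its content by showing the steps are uniformly gradient-related and invoking Zoutendijk's condition, which is the same underlying machinery.
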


\begin{proof}
  It follows from Theorem~\ref{lemma:cont} and the solution formula \eqref{eq:zeta_sampling} that $U$ is continuously differentiable as a function of $(\sigma, \zeta)$ with $I$ fixed.
  The same is then true for the finite dimensional function $\mathcal{U}(\kappa, \theta)$ appearing in \eqref{eq:minimization} due to the smoothness of the functions $s \mapsto \exp(s)$ and $s \mapsto s^2$; see \eqref{eq:PL_zeta}.
	It follows that $F$ is continuously differentiable.
	That the Wolfe conditions can be satisfied now follows from \cite[Lemma 3.1]{Nocedal99} and the continuous differentiability of $F$.

        For any real parameter $r$, we denote $\lev_r F := \{ \tau \mid F(\tau) \le r\}$ the $r$-sublevel set of $F$.
        	Since $F$ is coercive due to the regularization terms in \eqref{eq:minimization} and the bound $\Gamma_\kappa, \Gamma_\theta \le \gamma \Id$, the sublevel set $\lev_{F(\tau_0)}$ is bounded.
	Due to the finite-dimensionality of the domain, and the continuous differentiability of $\mathcal{U}$, it follows using a compact covering argument that $J=(\mathcal{U}_\sigma, \mathcal{U}_\theta)$ is Lipschitz within $\mathcal{N}$ for any bounded open set $\mathcal{N} \supset \lev_{F(\tau_0)} F$.
	Due to the sufficient decrease in the Wolfe conditions as well as $d_j$ being a direction of descend for $F$, i.e., $F'(\tau_j)d_j \le 0$, see the proof of \cite[Theorem 10.1]{Nocedal99}, we also have the monotonicity $F(\tau_{j+1}) \le F(\tau_j)$. Consequently $\{\tau_j\}_{j \in \N} \subset \lev_{F(\tau_0)}$.

	To prove convergence, we set
	\[
	    R(\tau) := L\begin{pmatrix}\mathcal{U}(\tau)-\mathcal{V} \\ \tau-\tau^\mu \end{pmatrix}.
	\]
	Then $F(\tau) = \norm{R(\tau)}^2$ whereas the least squares problem \eqref{eq:gn} can be written more compactly as $\min_{\Delta \tau} \norm{R(\tau)+R'(\tau)\Delta \tau}^2$, where
	\[
	    R'(\tau)=L\begin{pmatrix}J(\tau) \\ \Id\end{pmatrix}.
	\]
	That $F'(\tau_j) \to 0$ will follow from \cite[Theorem 10.1]{Nocedal99} after we verify that
\begin{itemize}
\item[a)]$R'$ is Lipschitz in an open bounded neighborhood $\mathcal{N}$ of $\lev_{F(\tau_0)} F$,
\item[b)] $R'(\tau)^\top R'(\tau) \ge  \tilde\gamma \Id$ for some $\tilde\gamma > 0$.
\end{itemize}
	As $L^\top L = \diag(\Gamma^{-1}_{\rm noise}, \Gamma^{-1}_\kappa, \Gamma^{-1}_\theta)$, we have $L^\top L \ge \begin{bsmallmatrix} \Gamma_{\text{noise}}^{-1} & 0 \\ 0 & \gamma^{-1} \Id \end{bsmallmatrix}$.
	It follows that $R'(\tau)^\top R'(\tau) =   J(\tau)^\top \Gamma_{\text{noise}}^{-1} J(\tau) + \gamma^{-1} \Id$, so that b) holds with $\tilde\gamma=\gamma^{-1}$.
	On the other hand, condition a) holds if $J$ is Lipschitz within $\mathcal{N}$, which we have also shown.
	Thus $F'(\tau_j) \to 0$.

	The existence of a subsequence $\{j_k\}_{k \in \N} \subset \N$ such that $\tau_{j_k}$ converges to some $\hat\tau$ follows from finite-dimensionality and the boundedness of $\lev_{F(\tau_0)} F$.
	Since, for any such subsequence, $F'(\tau_{j_k}) \to 0$ and $F'$ is continuous by the continuity and continuous differentiability of $\mathcal{U}$, necessarily $F'(\hat\tau)=0$.
\end{proof}

\section{Numerical examples}
\label{sec:numerics}

In this section we present some experimental results that support the use of our new model in connection with real world EIT measurements. To put it short, including the estimation of a boundary contact function as a part of a reconstruction algorithm for EIT can result in qualitatively better reconstruction of the primary unknown, i.e.~the domain conductivity, if it is expected that the electrode positions may have been mismodeled. Similar conclusions have previously been reached in~\cite{Candiani19,Darde12,Darde13a,Darde13b,Hyvonen17c} by resorting to shape derivatives of EIT measurements with respect to the electrode locations. Having (slightly) incomplete information on the electrode positions is an important case in practice as the contact locations are exactly known only in trivial experimental settings such as water tanks. Be that as it may, we resort to water tank experiments in what follows.

We consider the same data as in~\cite{Hyvonen17b}\footnote{Electrical impedance dataset of thorax shaped tank \href{https://doi.org/10.5281/zenodo.4475417}{doi:10.5281/zenodo.4475417}}. The measurements were performed using the \emph{Kuopio impedance tomography} (KIT4) device \cite{Kourunen09} on a thorax-shaped tank having a circumference of 106\,cm; see Figure~\ref{fig:nontrivialreconstruction}(b). The tank was equipped with 16 near-equidistant 2\,cm wide rectangular electrodes that reached from the bottom of the tank to the water surface at the height of 5\,cm. The measurements were conducted using low-frequency common ground current patterns with amplitudes of approximately 1.2\,mA. The phase information in the potential measurements is ignored and their amplitudes are treated as if they had resulted from measurements with direct current. Before forming any reconstructions, the current amplitudes (and the respective electrode potentials) are scaled to 1.0\,mA to avoid uneven weighting between current patterns. As the measurement setup is homogeneous in the vertical direction, it can be modeled by a two-dimensional version of the forward problem~\eqref{eq:cemeqs}. For further details on the measurement setup, see \cite{Hyvonen17b}. The source code of our implementation has been made available on Zenodo at \href{https://doi.org/10.5281/zenodo.4475524}{doi:10.5281/zenodo.4475524}

In all experiments, the two-dimensional computational domain is triangulated as a FE mesh consisting of 2725 nodes, being densest near the (extended) electrodes and sparsest near the center of the domain. The triangulations are generated using a slightly modified version of Triangle~\cite{Shewchuk96b}\footnote{Triangle with disable attribute interpolation switch by Kuutela 2020 \href{https://doi.org/10.5281/zenodo.4472025}{doi:10.5281/zenodo.4472025}}.

In order to provide a reasonably fair comparison between different approaches, we use the same values for the constants appearing in the prior covariances \eqref{eq:cov_kappa}, \eqref{eq:cov_zeta_PL} and \eqref{eq:cov_zeta_PH} in all tests. The choices are based on a grid search and manual verifications across several test cases. The employed parameter values are listed in Table~\ref{tab:regvalues}. Take note that we have made the naive choice of $\Gamma_{\rm noise} = \mathrm{I} \in \R^{240}$ in our numerical tests, which means that the standard deviations in Table~\ref{tab:regvalues} are in fact the ratios between the standard deviations for the listed parameters and that for the additive zero-mean noise that is assumed to have the same variance and be mutually independent over all potential measurements. Moreover, the weighted norm $\| \cdot \|_{\Gamma_{\rm noise}^{-1}}$ appearing in \eqref{eq:minimization} simply becomes the Euclidean norm $\|  \cdot \|_2$.  When computing reconstructions based on the traditional CEM with constant contacts (over the actual electrodes), no regularization with respect to the contact conductances is needed or used.

We consider three general geometries for the interplay between the true and extended electrodes. First, we present results for (almost) exactly known electrode positions, that is, the extended electrodes $E_m$ coincide with $e_m$ for $m=1, \dots, M$ up to the available information on the precise positions of the latter on the interior surface of the water tank. In particular, the available information on the electrode positions is used to its full extent when forming the reconstruction with all three parametrization,~i.e.~the standard CEM as well as the PL and PH parametrizations for the boundary conductivity in the new electrodeless model. In the second and third cases, the extended electrodes are approximately 12\,mm and 22\,mm wider, respectively, than the physical electrodes, with the corresponding extensions divided (uniformly) randomly between the two endpoints of the electrodes. When computing reconstructions based on the standard CEM in these cases, we assume that the electrode widths are known but intentionally mismodel the setup by placing each computational electrode in the middle of its extended counterpart. This amounts on average to approximately 3\,mm and 6\,mm and at most 6\,mm and 11\,mm of displacement, respectively, for the computational electrodes compared to the true physical setup when the standard CEM is used for forming the reconstructions.

\begin{table}[t]
	\begin{tabular}{lll}
		Variable & Value & Description \\
		\hline
		$\gamma_\kappa$ & $10$\,V$^{-1}$ & standard deviation for the log-conductivity\\
		$\lambda_\kappa$ & $3 \cdot 10^{-2}$\,m & correlation length for the log-conductivity\\
		$\gamma_\theta$ & $5 \cdot 10^2$\,S/V  & standard deviation in the PL parametrization\\
		$\lambda_\theta$ & $3 \cdot 10^{-3}$\,m & correlation length for the PL parametrization\\
		$\gamma_h$ & $10^3$\,S/(m\,V) & standard deviation for $h$ in the PH parametrization\\
		$\gamma_l$ & $10^{1.5}$\,V$^{-1}$ & standard deviation for $l$ in the PH parametrization\\
		$\gamma_w$ & $10^2$\,V$^{-1}$ & standard deviation for $w$ in the PH parametrization
	\end{tabular}
	\caption{The parameters employed in \eqref{eq:cov_kappa}, \eqref{eq:cov_zeta_PL} and \eqref{eq:cov_zeta_PH} throughout the numerical experiments.}
	\label{tab:regvalues}
\end{table}

\subsection{Comparison with experimental data}
\label{sec:inclusionlessdom}
In our first experiment, we consider a ``trivial'' case where the tank is filled with mere tap water without any embedded inhomogeneities. The domain conductivity is parametrized by a single variable with no regularization,~i.e.~no penalty term for the conductivity is included in the minimized Tikhonov functional. We are particularly interested in the final residual
\begin{equation}
  \label{eq:residual}
\| \mathcal{U}(\kappa_{\rm opt}, \theta_{\rm opt}) - \mathcal{V} \|_2
\end{equation}
and the resulting optimal constant conductivity level
$\sigma_{\rm opt} = \exp \kappa_{\rm opt}$ for the considered models,~i.e.~the standard CEM and the electrodeless model with the PL and PH parametrizations for the boundary conductivity. Our secondary interest lies with the logarithms of the estimated net electrode conductances $\log \int_{E_m} \zeta(\theta_{\rm opt}) \, {\rm d}s$, $m=1, \dots, M$, where $\theta_{\rm opt}$ denotes the optimal parameters for the boundary conductivity produced by Algorithm~\ref{alg:minimization} for the considered model and $\zeta = \zeta_{\rm CEM}$, $\zeta = \zeta_{\rm PL}$ or $\zeta = \zeta_{\rm PH}$.

As mentioned in Section~\ref{sec:zetaoptions}, the expected value for the contact conductance in the PL parametrization is the zero function, whereas those for the parameters appearing in the PH parametrization are zero for the net conductance $h$, the center of the extended electrode for the center of the hat function $l$, and the relative length of the true electrode compared to the employed extended electrode for the hat width $w$. The initial guesses for the contact conductivities in the PL and PH parametrizations, as well as for the standard CEM, are chosen so that the net conductance over each electrode is 0.001\,S. For the PL model, the initial nodal values are the same over the whole extended electrode, whereas for the PH model, the initial hat is at the center of the corresponding extended electrode, and it has the same width as the physical electrodes. In each case, Algorithm~\ref{alg:minimization} is run until obvious convergence, or up to 50 iterations.

The residuals and the estimated domain conductivity levels for the three models and the three geometric settings for the interplay between the electrodes and the extended electrodes are listed in Table~\ref{tab:trivialresults}, whereas the logarithms of the estimated net electrode conductances and their variances over the electrodes are given in Table~\ref{tab:trivialcontacts}. For comparison, the conductivity levels estimated for the same measurement setup in \cite{Hyvonen17b} were 0.022722\,S/m for the standard CEM and 0.022720\,S/m for a hat shaped contact conductivity model with known electrode positions and no regularization with respect to the height of the hat in the inversion algorithm. The corresponding reconstructed contact conductance levels at the electrodes are shown in \cite[Fig.~7]{Hyvonen17b}.

\begin{table}[t]
	\begin{center}
	    \begin{tabular}{l|rrr|rrr}
	      \multicolumn{1}{c}{} &
			\multicolumn{3}{c}{Residuals}& \multicolumn{3}{c}{Domain conductivities} \\
			& Exact & 12\,mm & 22\,mm & Exact & 12\,mm & 22\,mm \\
			\hline
			CEM & 0.0795 & 0.150 & 0.301 & 0.0228 & 0.0229 & 0.0231 \\
			PL & 0.0450 & 0.0349 & 0.0971 & 0.0228 & 0.0227 & 0.0228 \\
			PH & 0.249 & 0.0382 & 0.0369 & 0.0227 & 0.0227 & 0.0227
		\end{tabular}
	\caption{Residuals \eqref{eq:residual} and the estimated conductivity levels (S/m) for a tank without embedded inhomogeneities.}
	\label{tab:trivialresults}
	\end{center}
\end{table}

When the electrode positions are modeled (almost) exactly, the PH parametrization results in a larger residual \eqref{eq:residual} than the standard CEM and the PL model. The exact reason for this discrepancy is unknown, but it is presumably related to the prior model associated to the PH parametrization. All estimated domain conductivities are well aligned with the values from~\cite{Hyvonen17b}.

When the extended electrodes are 12\,mm wider than the true ones,~i.e.~the computational electrodes for the standard CEM are misplaced on average by 3.0\,mm, the residuals corresponding to the PL and, especially, the PH parametrization decrease whereas the residual for the standard CEM increases.
All estimates for the domain conductivity are still within one per cent of the values reported in~\cite{Hyvonen17b}.
It seems that the slight widening of the extended electrodes gives the PL and PH parametrizations more freedom to match the measurement data, whereas the mismodeling of the true electrodes slightly impairs the performance of the standard CEM in this regard.

Increasing the difference in widths between the extended and true electrodes to 22\,mm, and thus the average misplacement of the computational CEM electrodes to 5.6\,mm, essentially highlights the observations regarding the residuals \eqref{eq:residual} in the previous case: employing the standard CEM leads to a residual that is significantly larger than those for the PL and PH parametrizations. Both PH and PL models lead to domain conductivity estimates that are still in the vicinity of those listed in \cite{Hyvonen17b}, while the estimate produced by the CEM is somewhat inaccurate with the algorithm also having some trouble converging.

The means and standard deviations for the logarithms of the reconstructed net electrode conductances are listed in Table~\ref{tab:trivialcontacts}. The net conductances are on average largest for the PH parametrization, which is once again in line with the observations in~\cite{Hyvonen17b}. In particular, the net integrals of the contact conductivity over the electrodes is obviously not the only property of the contacts that affects the (simulated) electrode measurements, but the shape of the contact conductivity over individual electrodes also plays a role.
Mismodeling of the electrode positions seems to decrease the estimated net conductances for the standard CEM and the PL parametrization.
One of the electrodes, i.e.~number 15, seems to have a significantly better contact than the others, which does not have any clear explanation, but the observation is anyways aligned with the material in~\cite{Hyvonen17b}.

\begin{table}[t]
  \begin{center}
    \begin{tabular}{l|rrr|rrr}
      \multicolumn{1}{c}{} &
	  \multicolumn{3}{c}{Means for $\log \int_{E_m} \zeta \, {\rm d}s$}& \multicolumn{3}{c}{Standard deviations} \\
		& Exact & 12\,mm & 22\,mm & Exact & 12\,mm & 22\,mm \\
		\hline
		CEM & -3.65 & -3.82 & -4.19 & 0.481 & 0.363 & 0.246 \\
		PL & -2.61 & -4.47 & -4.83 & 0.518 & 0.402 & 0.312 \\
		PH & -3.39 & -3.34 & -3.40 & 0.179 & 0.272 & 0.335
	\end{tabular}
	\caption{Means and standard deviations for the logarithms of the reconstructed net electrode conductances (S) for a tank without embedded inhomogeneities.}
	\label{tab:trivialcontacts}
        \end{center}
\end{table}

\subsection{Reconstructions from experimental data}
\label{sec:inclusiondom}
We next consider reconstructing a nontrivial conductivity distribution inside the same water tank. Two inclusions are placed in the tank: an insulating plastic cylinder and a rectangular metal pipe as shown in Figure~\ref{fig:nontrivialreconstruction}(b). They are both homogeneous in the vertical direction and break the water surface, making the measurement configuration once again modelable by a two-dimensional version of \eqref{eq:cemeqs}. This time the conductivity field is discretized using the same 2725 piecewise linear FE basis functions as the ones used for solving \eqref{eq:cemeqs} in the two-dimensional domain. Encouraged by the experiments with the empty water tank, we choose $\sigma_0 = \sigma^\mu = 0.02$\,S/m as both the homogeneous initial guess and the expected value for the domain conductivity. The initial guesses and expected values for the PL and PH contact conductivity parametrizations are as in the previous experiment.  In each reconstruction process, we let Algorithm~\ref{alg:minimization} run until clear convergence or up to 50 iterations. The models for the considered extended electrodes and the resulting errors in the placement of the computational electrodes for the standard CEM are the same as for the experiments with the empty tank.

\begin{figure}[t]
	\centering
	\subfloat[Reconstructions]{\includegraphics[width=0.8\textwidth]{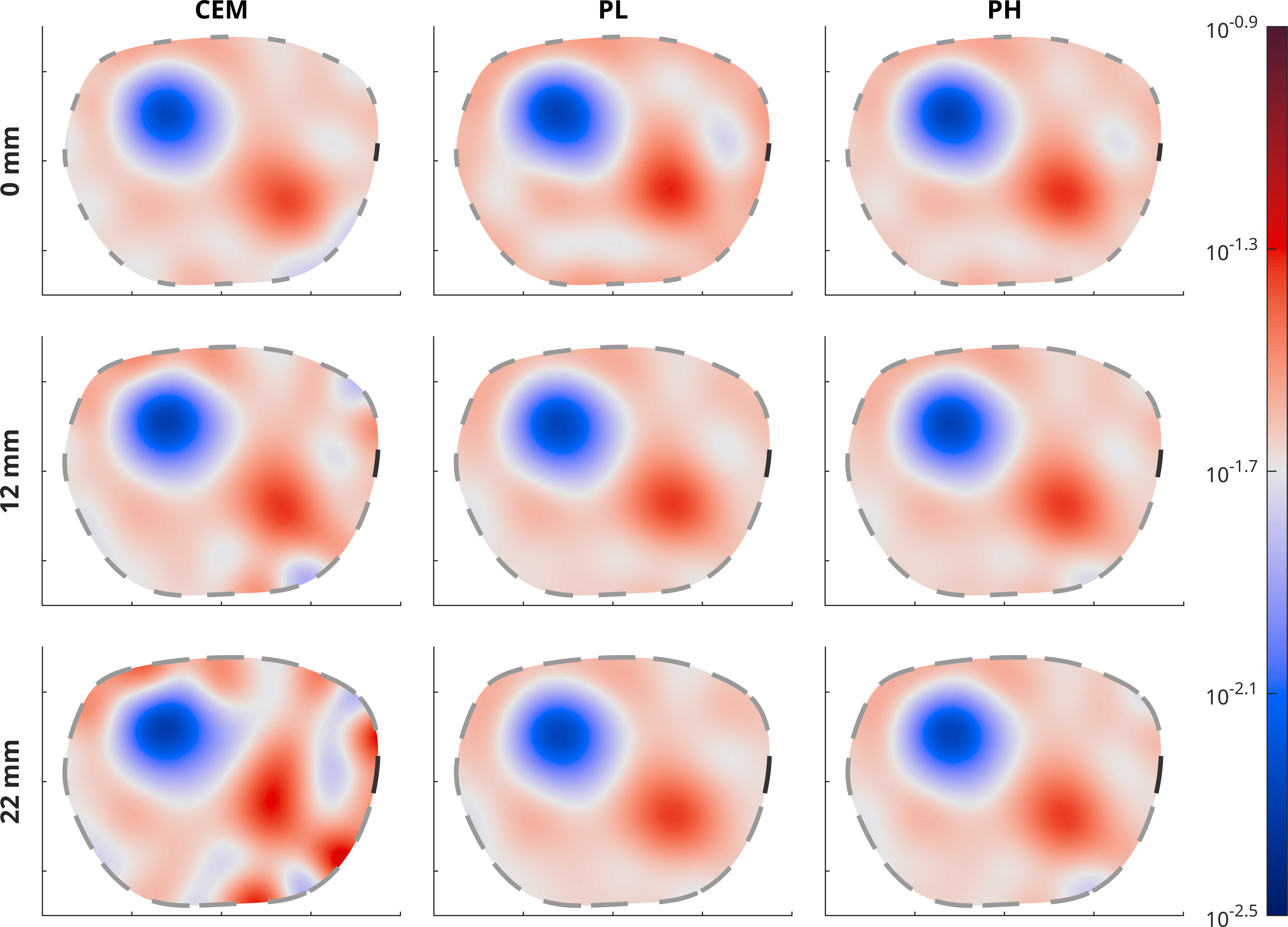}}\\
	\subfloat[Target]{\includegraphics[angle=180,width=0.33\textwidth]{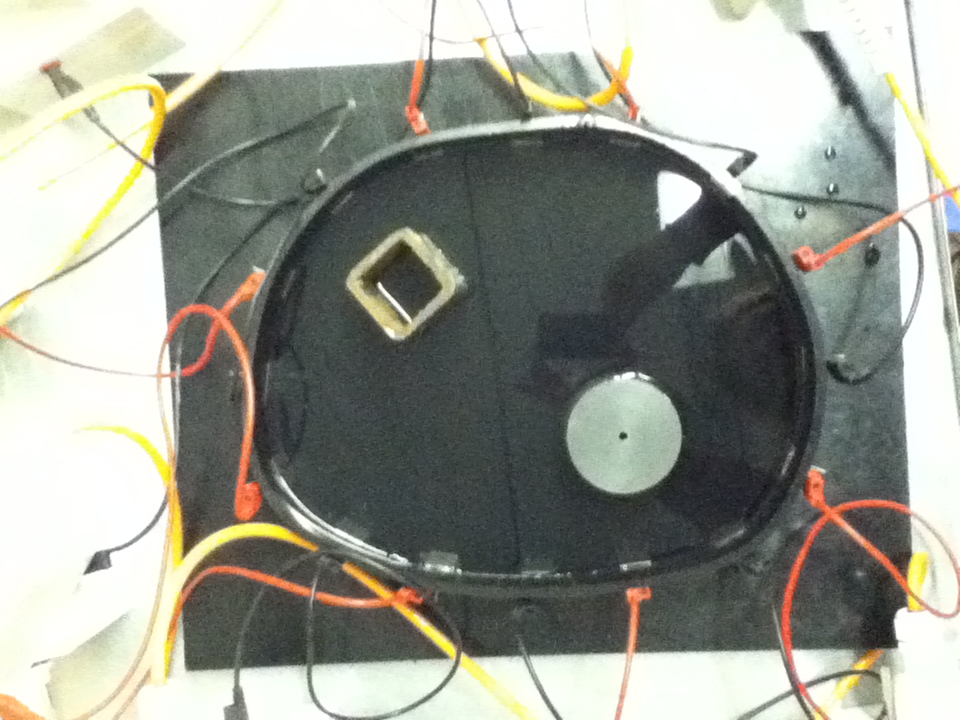}}
	\caption{Domain conductivity reconstructions (S/m) for a water tank with an insulating (top left) and a well conducting (bottom right) inclusions. The columns correspond to the standard CEM and the new electrodeless model with the PL and PH parametrizations. The rows are associated with different levels of error in the model for the electrode positions. The darkest extended electrode corresponds to $m=1$ and the others are numbered in the positive directions.}
	\label{fig:nontrivialreconstruction}
\end{figure}

The reconstructions for the three forward models, as well as for the three configurations for the (computational) electrodes and extended electrodes, are shown in Figure~\ref{fig:nontrivialreconstruction}. An immediate observation is that the quality of the reconstructions produced by the PL and PH parametrizations for the domain conductivity does not suffer noticeably from the mismodeling of the electrode positions; the freedom in these models for fine-tuning the contact conductivity shapes along the extended electrodes seems to provide enough flexibility for coping with the geometric modeling error. On the other hand, the reconstructions corresponding to the standard CEM significantly deteriorate as the mean amount of misplacement in the (computational) electrodes increases from 0 to 5.6\,mm. To be more precise, conductivity artifacts start to appear especially near the domain boundary.

Table~\ref{tab:mincomponents} shows the final values for the different quadratic terms appearing in the minimized Tikhonov functional \eqref{eq:minimization} for the three forward models and the extended electrodes that are 22\,mm wider than the true ones, corresponding on average to a misplacement of 5.6\,mm for the computational electrodes in the standard CEM. (Recall that there is no penalization with respect to the constant contact conductances in  \eqref{eq:minimization} for the standard CEM, which explains the missing value in Table~\ref{tab:mincomponents}.) It is obvious that with the considered prior model for the domain log-conductivity, the standard CEM with the erroneous electrode positions is not able to bring the data fit term $\| \mathcal{U}(\kappa, \theta) - \mathcal{V} \|_2$ down to the same level as the more flexible PL and PH parametrizations. A lower value for this residual could be achieved by relaxing the prior for $\kappa = \log\sigma$, but this would inevitably lead to even more severe artifacts in the associated reconstruction of $\sigma$.

\begin{table}[t]
	\centering
	\begin{tabular}{l|ccc}
		& $\| \mathcal{U}(\kappa, \theta) - \mathcal{V} \|_2$ & $\| \kappa - \kappa^\mu \|_{\Gamma^{-1}_\kappa}$ & $\| \theta - \theta^\mu \|_{\Gamma^{-1}_\theta}$\\
			\hline
			CEM & 0.255 & 0.0318 & --- \\
			PL & 0.0182 & 0.0187 & 0.00864 \\
			PH & 0.0186 & 0.0195 & 0.0128
	\end{tabular}
	\caption{Final values of the three terms in \eqref{eq:minimization} for the standard CEM and the PL and PH parametrizations for the boundary conductivity. There is no penalization with respect to the constant contact conductivities in the standard CEM.}
	\label{tab:mincomponents}
\end{table}

\subsection{Contact conductance reconstructions}
\
A natural question is whether our proposed methods actually employ the flexibility in the boundary conductivity parametrizations to reconstruct the electrode positions or do the reconstructions merely fit to the measurements with no clear physical interpretation. To this end, Figure~\ref{fig:contacts} visualizes the reconstructed boundary conductivities for the experiments with the empty water tank in Section~\ref{sec:inclusionlessdom} (left column) and with the tank enclosing the two cylindrical inclusions in Section~\ref{sec:inclusiondom} (right column). All images in Figure~\ref{fig:contacts} correspond to extended electrodes that are 22\,mm wider than the physical ones. According to a qualitative assessment of the rows in Figure~\ref{fig:contacts}, the reconstructions of the contact conductivity associated to the PH parametrization are similar for the empty tank and the variable conductivity case on each electrode, although the equivalence cannot be described as perfect. A similar conclusion does not hold on all electrodes for the PL parametrization; for the empty tank, the contact is divided on some extended electrodes into two humps located near the end points, which must be considered a bad approximation of the underlying physical reality.

\begin{figure}[t]
	\centering
	\includegraphics[width=0.85\textwidth]{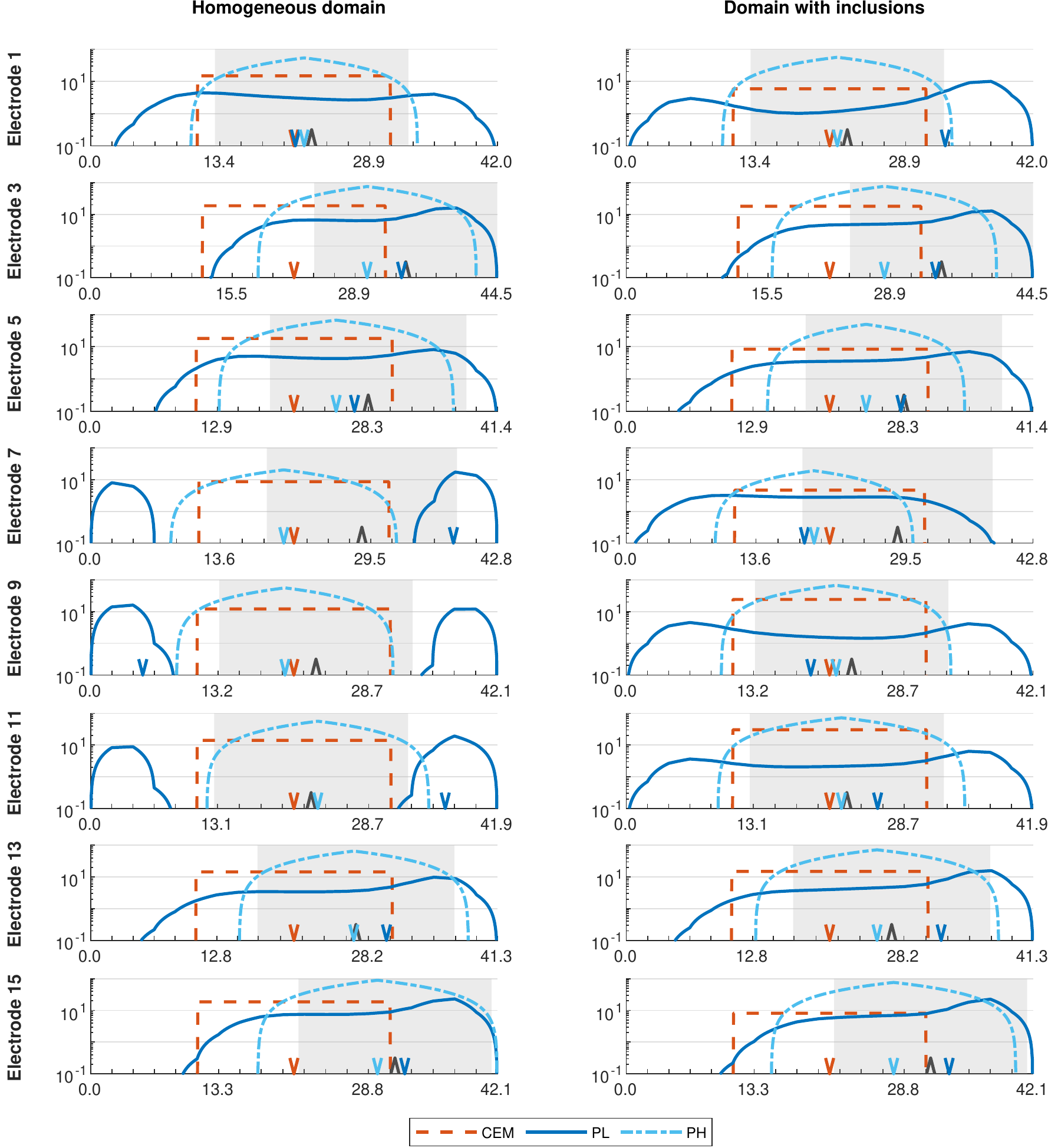}
 	\caption{Contact conductivity reconstructions on the odd electrodes for the experiments in Section~\ref{sec:inclusionlessdom} (left) and Section~\ref{sec:inclusiondom} (right) with 22\,mm electrode extensions. The horizontal axis represents arclength parametrization in the counter-clockwise direction and in millimeters for the extended electrode, with the ticks indicating nodes in the FE mesh. The vertical axis is logarithmic in S/m$^2$. The physical electrode position is indicated by a gray background and the red line depicts the reconstructed piecewise constant contact conductivity for the standard CEM at the (randomly) erroneous position. The dark and light blue curves are the reconstructed contact conductivities for the PL and PH parametrizations, respectively. Each inverted triangle marks the center of the contact conductivity mass for a particular reconstruction, while the black upright triangle is the center of the true electrode.}
	\label{fig:contacts}
\end{figure}

In particular, according to Figure~\ref{fig:contacts}, Algorithm~\ref{alg:minimization} seems to usually move the center of mass for the PH contact conductivity parametrization toward the center of the actual physical electrode. To quantify such a claim, let $s_m \in \R$ denote the midpoint of the $m$th physical electrode in an arclength parametrization of $\partial \Omega$, and let $\hat{s}_m \in \R$ be the center of mass for some reconstructed boundary conductivity on $E_m$ with respect to that same arclength parametrization. We define the corresponding mean error in the localization of the electrode contacts as
\begin{equation}
	\label{eq:centererror}
{\rm err}_E =  \frac{1}{M} \sum_{m=1}^M \left| s_m - \hat{s}_m \right|.
\end{equation}
Table~\ref{tab:contacterrs} lists these mean errors in the reconstructed electrode positions for the experiments with 22\,mm electrode extensions in Sections~\ref{sec:inclusionlessdom} and \ref{sec:inclusiondom}, with ${\rm err}_E = 5.57$\,mm for the standard CEM corresponding to the random variation in the positions of the (extended) electrodes in the computational model. The numbers in Table~\ref{tab:contacterrs} confirm that the contact centers indeed move closer to the midpoints of the physical target electrodes for the PH parametrization, but for the PL parametrization the results do not indicate a capability to locate the true electrodes. In particular, it seems that the unphysical division of some contacts into two parts leads to a worse performance than the trivial guess (cf.~the CEM) for the PL parametrization in the case of the empty tank.

The general features of the numerical results documented in this (and the previous) sections were consistently reproduced with different randomizations of the electrode extensions and for almost all experimental data for the considered water tank at our disposal. That is, when Algorithm~\ref{alg:minimization} is applied to the PL and PH parametrizations, the reconstructed boundary conductivities tend to compensate for the mismodeled electrode positions regardless of the domain conductance. However, it is unclear whether this extends to more complicated geometric settings and/or for other regularization or statistical inversion strategies for tackling the inverse problem of practical EIT. In particular, it is unclear what would be the most reasonable prior model for the PL parametrization of the contact conductivity. The prior used in this work allows unphysical shapes for the reconstructed PL contacts.

\begin{table}[t]
	\centering
	\begin{tabular}{l|cc}
		& Empty tank & With inclusions \\
		\hline
		CEM & 5.57 & 5.57 \\
		PL & 7.29 & 4.92 \\
		PH & 2.79 & 3.31
	\end{tabular}
	\caption{Reconstructed errors in the electrode contact midpoints \eqref{eq:centererror} for the experiments in Sections~\ref{sec:inclusionlessdom} and \ref{sec:inclusiondom} with 22\,mm electrode extensions.}
	\label{tab:contacterrs}
\end{table}

\section{Concluding remarks}
\label{sec:conclusions}

This work introduced a new approach to modeling electrodes  via introducing a spatially varying boundary admittivity function in the framework of the CEM of EIT under only generic information on the electrode locations. Two parametrizations for the boundary admittivity were considered: one directly employing the underlying piecewise linear FE basis and another corresponding to a single hat-shaped admittivity with varying width, height and location on each extended electrode. Our numerical experiments based on experimental water tank data demonstrated that reconstructing the boundary admittivity function as a part of an iterative Bayesian output least squares algorithm leads to far better reconstructions of the domain admittivity than simply ignoring the incomplete information on the electrode positions.

\appendix
\section{Derivatives for the PH parametrization}
\label{app:PHderivatives}
The derivatives of electrode measurements with respect to the free parameters in the PH electrode conductance parametrization \eqref{eq:phhats} can be straightforwardly deduced using Remark~\ref{rem:paramD}:
\begin{equation*}
	\begin{split}
		\frac{\partial_{h_m} U(\sigma, \zeta_{\rm PH}(\mathbf{h}, \mathbf{l}, \mathbf{w}), I)}{|E_m|} \cdot \tilde{I} =&
		-\int_{l_m - w_m/2}^{l_m} \frac{2 (-2 l_m + w_m + 2 t)}{w_m^2} \, (U - u) (\tilde{U} - \tilde{u}) \, {\rm d} t \\
		&-\int_{l_m}^{l_m + w_m/2} \frac{2 (2 l_m + w_m - 2 t)}{w_m^2} \, (U - u) (\tilde{U} - \tilde{u}) \, {\rm d} t,
                \\[2mm]
		\frac{\partial_{l_m} U(\sigma, \zeta_{\rm PH}(\mathbf{h}, \mathbf{l}, \mathbf{w}), I)}{|E_m|} \cdot \tilde{I} =&
		\int_{l_m - w_m/2}^{l_m} \frac{4h_m}{w_m^2} \, (U - u) (\tilde{U} - \tilde{u}) \, {\rm d} t \\
		&-\int_{l_m}^{l_m + w_m/2} \frac{4h_m}{w_m^2} \, (U - u) (\tilde{U} - \tilde{u}) \, {\rm d} t, \\[2mm]
		\frac{\partial_{w_m} U(\sigma, \zeta_{\rm PH}(\mathbf{h}, \mathbf{l}, \mathbf{w}), I)}{|E_m|} \cdot \tilde{I} =&
		\int_{l_m - w_m/2}^{l_m} \frac{2 h_m(-4l_m + w + 4t)}{ w_m^3} \, (U - u) (\tilde{U} - \tilde{u}) \, {\rm d} t \\
		&+\int_{l_m}^{l_m + w_m/2} \frac{2 h_m(4l_m + w - 4t)}{w_m^3} \, (U - u) (\tilde{U} - \tilde{u}) \, {\rm d} t,
	\end{split}
\end{equation*}
where the integrals correspond to a normalized parametrization of the $m$th extended electrode curve segment.

\bibliographystyle{siam}
\bibliography{noelec-refs}

\end{document}